\let\RE\Re
\let\Re=\undefined
\DeclareMathOperator{\Re}{\RE e}
\let\IM\Im
\let\Im=\undefined
\DeclareMathOperator{\Im}{\IM m}
\newcommand{\R}{\mathbbm R}
\renewcommand{\C}{\mathbbm C}
\newcommand{\Z}{\mathbbm Z}
\newcommand{\Ord}{\mathcal O}
\newcommand{\e}{\mathrm e}
\newcommand{\id}{\mathbbm{1}}
\renewcommand{\i}{\mathrm i}
\renewcommand{\d}{\mathrm d}
\DeclareMathOperator{\laplace}{\Delta_{\it x}}
\DeclareMathOperator{\divx}{div_{\it x}}
\DeclareMathOperator{\divy}{div_{\it y}}
\DeclareMathOperator{\curl}{curl_{\it x}}
\DeclareMathOperator{\grad}{grad_{\it x}}
\DeclareMathOperator{\grady}{grad_{\it y}}
\DeclareMathOperator{\supp}{supp}
\newcommand{\set}[1]{\{ #1 \}}
\newtheorem{lemma}{Lemma}[section]
\newaliascnt{proposition}{lemma}
\newtheorem{proposition}[proposition]{Proposition}
\newaliascnt{assumption}{lemma}
\newtheorem{assumption}[assumption]{Assumption}
\theoremstyle{nonumberplain}
\newtheorem{proof}{Proof}
\newcommand{\logmessage}[1]{\@latex@warning{#1}}
\begin{document}

\title{Mathematical Modelling of Optical Coherence Tomography}

\author{Peter Elbau$^1$\\{\footnotesize\href{mailto:peter.elbau@univie.ac.at}{peter.elbau@univie.ac.at}}
\and Leonidas Mindrinos$^1$\\{\footnotesize\href{mailto:leonidas.mindrinos@univie.ac.at}{leonidas.mindrinos@univie.ac.at}}     
\and Otmar Scherzer$^{1,2}$\\{\footnotesize\href{mailto:otmar.scherzer@univie.ac.at}{otmar.scherzer@univie.ac.at}}}          

\date{}

\maketitle

\hspace*{1em}
\parbox[t]{0.49\textwidth}{\footnotesize
\hspace*{-1ex}$^1$Computational Science Center\\
University of Vienna\\
Oskar-Morgenstern-Platz 1\\
A-1090 Vienna, Austria}
\parbox[t]{0.4\textwidth}{\footnotesize
\hspace*{-1ex}$^2$Johann Radon Institute for Computational\\
\hspace*{1em}and Applied Mathematics (RICAM)\\
Altenbergerstra{\ss}e 69\\
A-4040 Linz, Austria}

\vspace*{2em}

\abstract{In this chapter a general mathematical model of Optical Coherence Tomography (OCT) is presented on the basis of the electromagnetic theory. 
OCT produces high resolution images of the inner structure of biological tissues. Images are obtained by measuring the time delay and the intensity of the backscattered light from the sample considering also the coherence properties of light. The scattering problem is considered for a weakly scattering medium located far enough from the detector. The inverse problem is to reconstruct the susceptibility of the medium given the measurements for different positions of the mirror. Different approaches are addressed depending on the different assumptions made about the optical properties of the sample. This procedure is applied to a full field OCT system and an extension to standard (time and frequency domain) OCT is briefly presented.

\section{Introduction}
Optical Coherence Tomography (OCT) is a non-invasive imaging technique producing high-resolution images of biological tissues. OCT is based on Low (time) Coherence Interferometry and takes into account the coherence properties of light to image micro-structures with resolution in the range of few micrometers. Standard OCT operates using broadband and continuous wave light in the visible and near-infrared spectrum. OCT images are obtained by measuring the time delay and the intensity of backscattered or back-reflected light from the sample under investigation. 

Since it was first established in 1991 by Huang \textit{et al} \cite{HuaSwaLinSchuStiCha91}, the clinical applications of OCT have been greatly improved. Ophthalmology remains the dominant one, initially applied in 1993 \cite{FerHitDreKamSat93, SwaIzaHeeHuaLinSchu93}. The main reason is that OCT has limited penetration depth in biological tissues, but high resolution. The theory of OCT has been analysed in details in review papers \cite{Fer10, FerDreHitLas03, Pod05, Schm99,  TomWan05}, in book chapters \cite{FerHit02, FerSanJorAnd09, ThoSanMogThrJorJem09} and in books \cite{BouTea02, Bre06, DreFuj08}.

To derive a mathematical model for the OCT system, the scattering properties of the sample need to be described. There exist several different approaches to model the propagation of light within the sample: the radiative transfer equation with scattering and absorption coefficients \cite{Dol98,SchmKnuBon93,TurSerDolKamSha05}, Lambert--Beer's law with the attenuation coefficient \cite{SchmXiaYun98, XuMarDoBop04}, the equations of geometric optics with the refractive index \cite{BruCha05}, and Maxwell's equations with the susceptibility (or the refractive index) as optical parameters of the medium \cite{BroThuBur00,FenWanEld03,KnuSchoBoc96, SchmKnu97, ThrYurAnd00}.
Also statistical approaches using Monte Carlo simulations are used \cite{AndThrYurTycJorFro04,DuaMakYamLimYas11,KirMegKuzSerMyl10,PanBirRosEng95, SmiLinCheNelMil98}.

This chapter describes the propagation of the electromagnetic wave through the sample using Maxwell's equations and adopts the analysis based on the theory of electromagnetic fields scattered by inhomogeneous media \cite{ColKre98, FriWol83}. The sample is hereby considered as a linear dielectric medium (potentially inhomogeneous and anisotropic). Moreover, the medium is considered weakly scattering so that the first order Born approximation can be used and, as it is usually assumed in OCT, the backscattered light is detected far enough from the sample so that the far field approximation is valid. Starting from this model, different reconstruction formulas for special cases regarding the inner structure of the sample are presented. 

This chapter is organised as follows. In \autoref{seBasicPrinciples}, the principles of OCT and different variants of OCT systems are presented. \autoref{seDirectProblem} describes the solution of Maxwell's equations and an appropriate formula for the measurements of OCT is derived. Given the initial field and the optical properties (the susceptibility) of the sample, the solution of the direct problem is obtained in \autoref{seSolutionDirectProblem}. An iterative scheme is derived in the last section for the reconstruction of the unknown susceptibility, which is the inverse problem of OCT.

\section{Basic Principles of OCT}\label{seBasicPrinciples}

OCT is used to gain informations about the light scattering properties of an object by illuminating it with some short laser pulse and measuring the backscattered light. 

The name ``Optical Coherence Tomography'' is motivated by the way the scattering data are measured: To get more precise measurements, the backscattered light is not directly detected, but first superimposed with the original laser pulse and then the intensity of this interference pattern is measured (this means that one measures the ``coherence'' of these two light beams). 

Experimentally, this is done by separating the incoming light at a beam splitter into two identical beams which travel two different paths. One beam is simply reflected by a mirror and sent back to the beam splitter, while the other beam is directed to the sample. At the beam splitter, the beam reflected by the mirror and the backscattered light from the sample are recombined and sent to the detector \cite{FerDreHitLas03, IzaCho08, TomWan05}. See \autoref{picOCTsystem} for an illustration of this procedure.

\begin{figure}[ht]
\begin{center}
\includegraphics{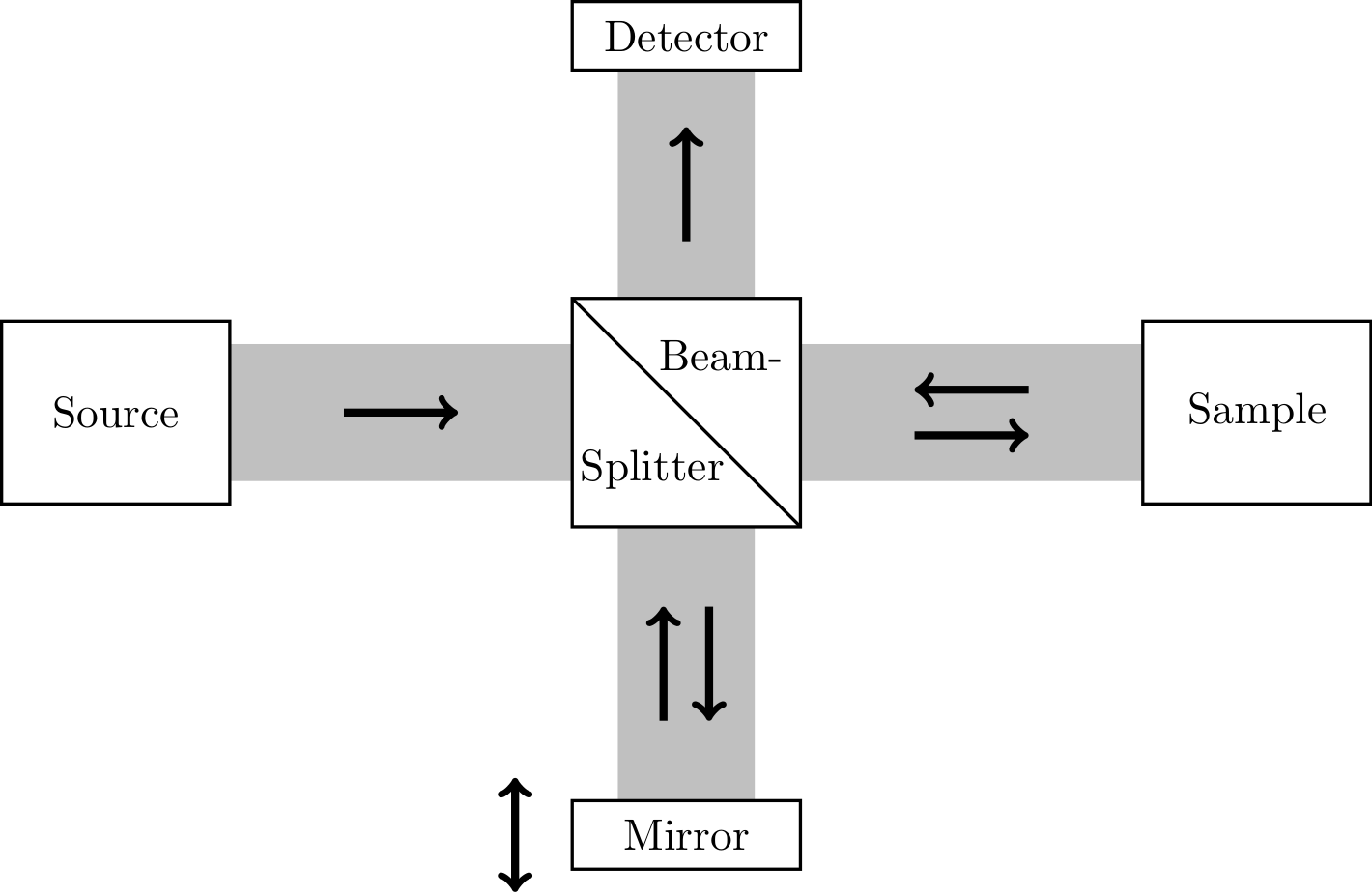}
\caption{Schematic diagram of the light travelling in an OCT system. The laser beam emitted by the source is divided at the beam splitter into two light beams; one is reflected at a mirror, the other one backscattered from the sample. The superposition of the two reflected beams is then measured at the detector.}
\label{picOCTsystem}
\end{center}
\end{figure}

There exist different variants of the OCT regarding the way the measurements are done:
\begin{description}
\item[Time and frequency domain OCT:]
In time domain OCT, the position of the mirror is varied and for each position one measurement is performed. On the other hand, in frequency domain OCT, the reference mirror is fixed and the detector is replaced by a spectrometer. Both methods provide equivalent measurements which are connected by a Fourier transform.
\item[Standard and full field OCT:]
In standard OCT, the incoming light is focused through objective lenses to one spot in a certain depth in the sample and the backscattered light is measured in a point detector. This means that to obtain informations of the whole sample, a transversal-lateral scan has to be performed (by moving the light beam over the frontal surface of the sample). 
In full field OCT, the entire frontal surface of the sample is illuminated at once and the single point detector is replaced by a two-dimensional detector array, for instance by a charge-coupled device (CCD) camera.
\item[Polarisation sensitive OCT:]
In classical OCT setups, the electromagnetic wave is simply treated as a scalar quantity. In polarisation sensitive OCT, however, the illuminating light beams are polarised and the detectors measure the intensity of the two polarisation components of the interfered light.
\end{description}
There are also further modifications such as Doppler OCT and quantum OCT, which are not addressed here.
In this chapter, the focus is mainly on time domain full field OCT, but also the others are discussed.

\section{The Direct Scattering Problem}\label{seDirectProblem}
To derive a mathematical model for an OCT system, one has to describe on one hand the propagation and the scattering of the laser beam in the presence of the sample and on the other hand the way how this scattered wave is measured at the detectors. For the first part, the interaction of the incoming light with the sample can be modelled with Maxwell's macroscopic equations.

\subsection{Maxwell's Equations}
Maxwell's equations in matter consist of the partial differential equations
\begin{subequations}\label{eqMaxwell}
\begin{align}
\divx D(t,x) &= 4\pi\rho(t,x),\quad &&t\in\R,\;x\in\R^3, \label{eqMaxwellGauss}\\
\divx B(t,x) &= 0,\quad &&t\in\R,\;x\in\R^3, \\
\curl E(t,x) &= -\frac1c\frac{\partial B}{\partial t}(t,x),\quad &&t\in\R,\;x\in\R^3, \label{eqMaxwellFaraday}\\
\curl H(t,x) &= \frac{4\pi}c J(t,x)+\frac1c\frac{\partial D}{\partial t}(t,x),\quad &&t\in\R,\;x\in\R^3, \label{eqMaxwellAmpere}
\end{align}
\end{subequations}
relating the following physical quantities (at some time $t\in\R$ and some location $x\in\R^3$):
\begin{center}
\begin{tabular}{|c|c|c|}
\hline
speed of light & $c$ & $\R$ \\
\hline
\hline
external charge density			& $\rho(t,x)$	& $\R$ \\
external electric current density	& $J(t,x)$	& $\R^3$\\
\hline
\hline
electric field				& $E(t,x)$	& $\R^3$\\ 
electric displacement			& $D(t,x)$	& $\R^3$ \\ 
magnetic induction			& $B(t,x)$	& $\R^3$\\ 
magnetic field				& $H(t,x)$	& $\R^3$\\ 
\hline
\end{tabular}
\end{center}

Maxwell's equations do not yet completely describe the propagation of the light (even assuming that the charge density $\rho$ and the current density $J$ are known, there are only $8$ equations for the $12$ unknowns $E$, $D$, $B$, and $H$). 

Additionally to Maxwell's equations, it is therefore necessary to specify the relations between the fields $D$ and $E$ as well as between $B$ and $H$.

Let $\Omega\subset\R^3$ denote the domain where the sample is located. It is considered as a non-magnetic, dielectric medium without external charges or currents, this means that for all $t\in\R$ and all $x\in\Omega$ the electric and magnetic fields fulfil the relations
\begin{subequations}\label{eqAssumptionsSample}
\begin{align}
D(t,x)&=E(t,x)+\int_0^\infty\chi(\tau,x)E(t-\tau,x)\d\tau, \label{eqSusceptibility}\\
B(t,x)&=H(t,x), \\
\rho(t,x)&=0, \\
J(t,x)&=0,
\end{align}
\end{subequations}
where the function $\chi:\R\times\R^3\to\R^{3\times 3}$ (for convenience, $\chi$ is also defined for negative times by $\chi(t,x)=0$ for $t<0$, $x\in\R^3$) is called the (electric) susceptibility and is the quantity to be recovered.  The time dependence of $\chi$ hereby describes the fact that a change in the electric field $E$ cannot immediately cause a change in the electric displacement $D$. Since this delay is quite small, it is sometimes ignored and $\chi(t,x)$ is then replaced by $\delta(t)\chi(x)$. Moreover, the medium is often considered to be isotropic, which means that $\chi$ is a multiple of the identity matrix.

The sample is situated in vacuum and the assumptions~\eqref{eqAssumptionsSample} are modified by setting for all $t\in\R$ and all $x\in\R^3\setminus\Omega$
\begin{subequations}\label{eqAssumptionsVacuum}
\begin{align}
D(t,x)&=E(t,x),\label{eqSusceptibilityVacuum} \\
B(t,x)&=H(t,x), \\
\rho(t,x)&=0, \\
J(t,x)&=0.
\end{align}
\end{subequations}
This simply corresponds to extend the equations \eqref{eqAssumptionsSample} to $\R\times\R^3$ and to assume $\chi(t,x)=0$ for all $t\in\R$, $x\in\R^3\setminus\Omega$. 

In this case of a non-magnetic medium, Maxwell's equations result into one equation for the electric field $E$. To get rid of the convolution in \eqref{eqSusceptibility}, it is practical to consider the Fourier transform with respect to time. In the following, the convention
\[ \hat f(\omega,x) = \int_{-\infty}^\infty f(t,x)\e^{\i\omega t}\d t ,\]
for the Fourier transform of a function $f$ with respect to $t$ is used.

\begin{proposition}\label{thVectorHelmholtz}
Let $E$, $D$, $B$, and $H$ fulfil Maxwell's equations~\eqref{eqMaxwell}. Moreover, let assumptions~\eqref{eqAssumptionsSample} and~\eqref{eqAssumptionsVacuum} be satisfied. 
Then the Fourier transform $\hat E$ of $E$ fulfils the vector Helmholtz equation
\begin{equation}\label{eqVectorHelmholtz}
\curl \curl\hat E (\omega,x) - \frac{\omega^2}{c^2}(\id+\hat\chi(\omega,x))\hat E(\omega,x) = 0,\quad\omega\in\R,\;x\in\R^3.
\end{equation}
\end{proposition}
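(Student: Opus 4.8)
The plan is to eliminate every field except $\hat E$ from the transformed Maxwell system. The key simplification is that the Fourier transform in time turns the temporal convolution in~\eqref{eqSusceptibility} into a pointwise product and the time derivatives in~\eqref{eqMaxwellFaraday} and~\eqref{eqMaxwellAmpere} into multiplication by a factor linear in $\omega$, so that the whole computation becomes essentially algebraic in $\omega$ for each fixed $x$.

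First I would apply the Fourier transform to Faraday's law~\eqref{eqMaxwellFaraday} and to Ampère's law~\eqref{eqMaxwellAmpere}. With the stated convention, integration by parts gives $\widehat{\partial_t f}(\omega,x) = -\i\omega\hat f(\omega,x)$, the boundary terms at $t=\pm\infty$ vanishing under the implicit decay assumptions on the fields (or, reading the transforms distributionally, without further hypotheses). Since $\rho=0$ and $J=0$ by~\eqref{eqAssumptionsSample} and~\eqref{eqAssumptionsVacuum}, and since $B=H$ so that $\hat B=\hat H$, the two curl equations become
\begin{equation*}
\curl\hat E(\omega,x) = \frac{\i\omega}{c}\hat H(\omega,x), \qquad \curl\hat H(\omega,x) = -\frac{\i\omega}{c}\hat D(\omega,x).
\end{equation*}
Taking the curl of the first identity and substituting the second eliminates $\hat H$ and yields $\curl\curl\hat E = \frac{\omega^2}{c^2}\hat D$, using $\i^2=-1$.

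It remains to express $\hat D$ through $\hat E$. The constitutive relation~\eqref{eqSusceptibility}, extended to all of $\R^3$ by setting $\chi(t,x)=0$ for $x\in\R^3\setminus\Omega$ as in~\eqref{eqAssumptionsVacuum}, reads $D(\cdot,x) = E(\cdot,x) + (\chi(\cdot,x) * E(\cdot,x))$, a temporal convolution (the lower limit $0$ being automatic because $\chi(\tau,x)=0$ for $\tau<0$). Applying the convolution theorem componentwise to the matrix-valued kernel $\chi$ acting on the vector $E$ gives $\hat D(\omega,x) = (\id+\hat\chi(\omega,x))\hat E(\omega,x)$. Inserting this into the previous identity produces exactly~\eqref{eqVectorHelmholtz}.

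The main point to watch is analytic rather than algebraic: justifying the transform of the derivatives and of the convolution requires either decay hypotheses on the fields in $t$ or a distributional interpretation, and the convolution theorem must be invoked entrywise for the matrix $\chi$. I would also remark that Gauss's law~\eqref{eqMaxwellGauss} and $\divx B=0$ are not needed to reach the second-order equation~\eqref{eqVectorHelmholtz} as stated: only the two curl equations and the constitutive relations enter, since no use is made of the identity $\curl\curl=\grad\divx-\laplace$ to rewrite the curl-curl term.
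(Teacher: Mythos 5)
Your proof is correct and follows essentially the same route as the paper: both eliminate the magnetic field by taking the curl of Faraday's law and substituting Amp\`ere's law (with $B=H$, $J=0$), and both convert the constitutive relation into $\hat D = (\id+\hat\chi)\hat E$ via the convolution theorem. The only cosmetic difference is that you Fourier transform the two curl equations first and work algebraically in $\omega$, whereas the paper performs the curl-curl elimination in the time domain and transforms the resulting second-order equation at the end.
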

\begin{proof}
Applying the curl to~\eqref{eqMaxwellFaraday} and using \eqref{eqMaxwellAmpere} with the assumptions $B=H$ and $J=0,$ yields
\begin{equation}\label{eqVectorHelmholtzFourier}
\curl \curl E  (t,x)= -\frac1c\frac{\partial\curl B}{\partial t} (t,x)= -\frac1{c^2}\frac{\partial^2D}{\partial t^2}(t,x).
\end{equation}
The Fourier transform of \eqref{eqSusceptibility} and~\eqref{eqSusceptibilityVacuum} and the Fourier convolution theorem (recall that $\chi$ is set to zero outside $\Omega$) imply that
\[ \hat D(\omega,x) = (\id+\hat\chi(\omega,x))\hat E(\omega,x),\quad\text{for all}\quad \omega \in\R,\;x\in\R^3. \]
Therefore, the equation \eqref{eqVectorHelmholtz} follows by taking the Fourier transform of \eqref{eqVectorHelmholtzFourier}.
\end{proof}

\subsection{Initial Conditions}

The sample is illuminated with a laser beam described initially (before it interacts with the sample) by the electric field $E^{(0)}:\R\times\R^3\to\R^3$ which is (together with some magnetic field) a solution of Maxwell's equations \eqref{eqMaxwell} with the assumptions \eqref{eqAssumptionsVacuum} for all $x \in \R^3$. Then, it follows from the proof of the \autoref{thVectorHelmholtz}, for $\chi = 0 $, that
\begin{equation}\label{eqVectorHelmholtzIncident}
\curl \curl\hat E^{(0)} (\omega,x) - \frac{\omega^2}{c^2} \hat E^{(0)} (\omega,x) = 0,\quad\omega\in\R,\;x\in\R^3.
\end{equation}

Moreover, it is assumed that $E^{(0)}$ does not interact with the sample until the time $t=0$, which means that $\supp E^{(0)}(t,\cdot)\cap\Omega = \emptyset$ for all $t\le0$. 

The electric field $E:\R\times\R^3\to\R^3$ generated by this incoming light beam in the presence of the sample is then a solution of Maxwell's equations \eqref{eqMaxwell} with the assumptions \eqref{eqAssumptionsSample} and the initial condition
\begin{equation}\label{eqInitialCondition}
E(t,x) = E^{(0)}(t,x)\quad\text{for all}\quad t\le0,\;x\in\R^3.
\end{equation}

Since Maxwell's equations for $E$ in \autoref{thVectorHelmholtz} are reformulated as an equation for the Fourier transform $\hat E$, it is helpful to rewrite the initial condition in terms of $\hat E$. 

\begin{proposition}\label{thPaleyWiener}
Let $E$ (together with some magnetic field $H$) fulfil Maxwell's equations~\eqref{eqMaxwell} with the assumptions~\eqref{eqAssumptionsSample} and~\eqref{eqAssumptionsVacuum} and with 
the initial condition~\eqref{eqInitialCondition}.

Then the Fourier transform of $E-E^{(0)}$ fulfils that the function $\omega\mapsto \hat E(\omega,x)-\hat E^{(0)}(\omega,x)$, defined on $\R$, can be extended to a square integrable, holomorphic function on the upper 
half plane $\mathbbm H=\{\omega\in\C\mid\Im(\omega)>0\}$ for every $x\in\R^3$.
\end{proposition}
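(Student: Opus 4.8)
The plan is to recognise this statement as an instance of the Paley--Wiener theorem, the key structural input being that the initial condition~\eqref{eqInitialCondition} says exactly that the difference $g := E-E^{(0)}$ vanishes for all non-positive times. First I would set $g(t,x) = E(t,x)-E^{(0)}(t,x)$ and record that, by~\eqref{eqInitialCondition}, $g(t,x)=0$ for all $t\le0$ and $x\in\R^3$; hence for each fixed $x$ the function $t\mapsto g(t,x)$ satisfies $\supp g(\cdot,x)\subseteq[0,\infty)$. Assuming (as is implicit for the Fourier transform to be meaningful) that $g(\cdot,x)\in L^2(\R)$, its Fourier transform with the convention of the excerpt reduces to $\hat g(\omega,x) = \int_0^\infty g(t,x)\e^{\i\omega t}\d t = \hat E(\omega,x)-\hat E^{(0)}(\omega,x)$.

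Next I would push $\omega$ into the upper half plane. Writing $\omega=\sigma+\i\eta$ with $\eta>0$, the kernel becomes $\e^{\i\sigma t}\e^{-\eta t}$, so it acquires a damping factor $\e^{-\eta t}$ which is bounded by $1$ on the support $t\ge0$. The integral therefore still converges, and by differentiating under the integral sign (justified by dominated convergence, or equivalently by Morera's theorem after verifying that the contour integral vanishes) it defines a function holomorphic in $\mathbbm H$ whose boundary values on $\R$ coincide with $\omega\mapsto\hat E(\omega,x)-\hat E^{(0)}(\omega,x)$.

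For the square integrability I would apply Plancherel's identity to the damped function $t\mapsto g(t,x)\e^{-\eta t}$, which is again supported in $[0,\infty)$ and lies in $L^2(\R)$. For every $\eta>0$ this gives
\[
  \int_{-\infty}^\infty \abs{\hat g(\sigma+\i\eta,x)}^2\d\sigma
  = 2\pi\int_0^\infty \abs{g(t,x)}^2\e^{-2\eta t}\d t
  \le 2\pi\,\norm{g(\cdot,x)}_{L^2(\R)}^2,
\]
a bound uniform in $\eta$. This uniform bound is precisely membership in the Hardy space of the upper half plane, i.e.\ the desired square integrable holomorphic extension.

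The hard part is not any of these analytic manipulations but the regularity/integrability input: to form $\hat g$ and to invoke Plancherel one genuinely needs $g(\cdot,x)\in L^2(\R)$ (or $L^1\cap L^2$), which is not stated explicitly and would have to be extracted from the (unstated) temporal decay of the laser pulse $E^{(0)}$ and of the scattered field $E$. Once this integrability is granted, the interchange of differentiation and integration needed for holomorphy and the passage to the uniform $\eta$-bound are routine.
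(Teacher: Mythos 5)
Your proof is correct and follows essentially the same route as the paper: both hinge on the observation that the initial condition forces $E-E^{(0)}$ to vanish for $t\le0$, so that the one-sided Fourier integral $\int_0^\infty(E-E^{(0)})(t,x)\e^{\i\omega t}\d t$ converges and is holomorphic on $\mathbbm H$, which is exactly the Paley--Wiener theorem the paper invokes. The only difference is one of detail, not of substance: the paper cites Paley--Wiener directly, while you unpack its proof (differentiation under the integral for holomorphy, Plancherel on the damped function for the uniform $L^2$ bound on horizontal lines) and explicitly flag the implicit $L^2$ integrability of $E-E^{(0)}$ in time, which the paper likewise takes for granted.
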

\begin{proof}
From the initial condition~\eqref{eqInitialCondition} it follows that $E(t,x)-E^{(0)}(t,x)=0$ for all $t\le0 .$ Thus, the result is a direct consequence form the Paley--Wiener theorem, which is based on the fact that in this case
\[ \hat E(\omega,x)-\hat E^{(0)}(\omega,x) = \int_0^\infty(E-E^{(0)})(t,x)\e^{\i\omega t}\d t \]
is well defined for all $\omega\in\mathbbm H$ and complex differentiable with respect to $\omega\in\mathbbm H$.
\end{proof}

Remark that the electric field $E$ is uniquely defined by \eqref{eqVectorHelmholtz} and~\autoref{thPaleyWiener}.

\subsection{The Measurements}\label{seMeasurementsGeneral}

The measurements are obtained by the combination of the backscattered field from the sample and the back-reflected field from the mirror. In practice, see \autoref{picOCTsystem}, the sample and the mirror are in different positions. However, without loss of generality, a placement of them around the origin is assumed in the proposed formulation, in order to avoid rotating the coordinate system. To do so, the simultaneously illumination of the sample and the mirror is suppressed and two different illumination schemes are considered. The gain is to keep the same coordinate system but the reader should not be confused with illumination at different times.

Thus, the electric field $E$, which is obtained by illuminating the sample with the initial field $E^{(0)}$ (that is $E$ solves \eqref{eqVectorHelmholtz} 
with the initial condition~\eqref{eqInitialCondition}), is combined with $E_r$ which is the electric field obtained by replacing the sample by a mirror and illuminating with the same initial field $E^{(0)}$.

The mirror is placed orthogonal to the unit vector $e_3=(0,0,1)$ through the point $re_3$. As in \eqref{eqInitialCondition}, it is assumed that $\supp E^{(0)}(t,\cdot)$ 
does not interact with the mirror for $t<0$, so that
\begin{equation}\label{eqInitialConditionMirror}
E_r(t,x)=E^{(0)}(t,x)\quad\text{for all}\quad t<0,\;x\in\R^3.
\end{equation}
Then the resulting electric field $E_r:\R\times\R^3\to\R^3$ is given as the solution of the same equations as $E$ (Maxwell's equations \eqref{eqMaxwell} together with the assumptions \eqref{eqAssumptionsSample} and initial condition \eqref{eqInitialConditionMirror}) with the susceptibility $\chi$ replaced by the susceptibility $\chi_r$ of the mirror at position $r$. 
One sort of (ideal) mirror can be described via the susceptibility $\chi_r(t,x)=0$ for $x_3>r$ and $\chi_r(t,x)=C\delta(t)\mathbbm1$ for $x_3\le r$ with an (infinitely) large constant $C>0$.

\begin{figure}[ht]
\begin{center}
\includegraphics{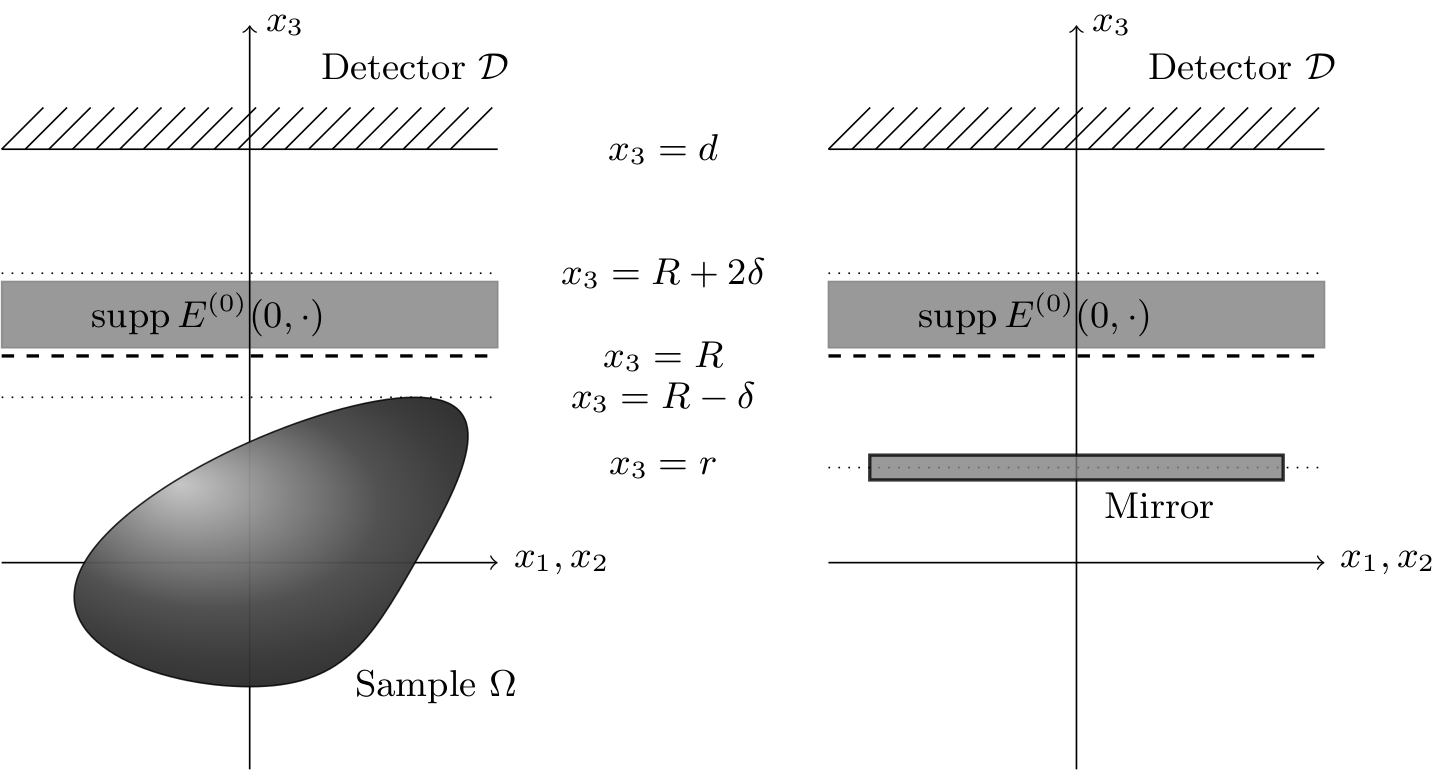}
\caption{The two scattering problems involved in OCT: On the left hand side the scattering of the initial wave on the sample $\Omega$; on the right hand side the reference problem where the initial wave $E^{(0)}$ is reflected by a perfect mirror at a tunable position $r\in(-\infty,R)$. The two resulting electric fields, $E$ and $E_r$, are then combined and this superposition $E+E_r$ is measured at the detector surface $\mathcal D$.}
\label{fgMeasurementSetup}
\end{center}
\end{figure}

The intensity $I_r$ of each component of the superposition of the electric fields $E$ and $E_r$ averaged over all time is measured at some detector points. The detectors are positioned at all points on the plane 
\[ \mathcal D=\{x\in\R^3\mid x_3=d\} \]
parallel to the mirror at a distance $d>0$ from the origin. The mirror and the sample are both located in the lower half plane of the detector surface with some minimal distance to~$\mathcal D$. Moreover, the highest possible position $R\in(\delta,d-2\delta)$ of the mirror shall be by some distance $\delta>0$ closer to the detector than the sample, this means (see \autoref{fgMeasurementSetup})
\begin{equation}\label{eqGeometryAssumptions}
\sup_{x\in\Omega}x_3<R-\delta\quad\text{and}\quad r\in(-\infty,R).
\end{equation}

To simplify the argument, let us additionally assume that the incoming electric field $E^{(0)}$ does not influence the detector after the time $t=0$, meaning that
\begin{equation}\label{eqNoSignalFromIncomingWave}
E^{(0)}(t,x)=0\quad\text{for all}\quad t\ge0,\;x\in\mathcal D.
\end{equation}

At the detector array, the data are obtained by measuring
\begin{equation}\label{eqIntensity}
I_{r,j}(x)=\int_0^\infty|E_j(t,x)+E_{r,j}(t,x)|^2\d t,\quad x\in\mathcal D,\;j\in\{1,2,3\}.
\end{equation}
In standard OCT, the polarisation is usually ignored. In this case, only the total intensity $I_r=\sum_{j=1}^3I_{r,j}$ needs to be measured, see \autoref{seIsotropicCase} for the corresponding reconstruction formulas.

In this measurement setup, it is easy to acquire besides the intensity $I_r$ also the intensity of the two waves $E$ and $E_r$ separately by blocking one of the two waves $E$ and $E_r$ at a time. Practically, it is sometimes not even necessary to measure them since the intensity of the reflected laser beam $E_r$ can be explicitly calculated from the knowledge of the initial beam $E^{(0)}$, and the intensity of $E$ is usually negligible compared with the intensity $I_r$ (because of the assumption \eqref{eqNoSignalFromIncomingWave}, the field $E$ contains only backscattered light at the detector after the measurement starts). Therefore, one can consider instead of $I_r$ the function
\begin{equation}\label{eqEffectiveMeasurements}
M_{r,j}(x) = \frac12\left(I_{r,j}-\int_0^\infty|E_j(t,x)|^2\d t -\int_0^\infty|E_{r,j}(t,x)|^2\d t\right)
\end{equation}
for $r\in(-\infty,R)$, $j\in\{1,2,3\}$, and $x\in\mathcal D$ as the measurement data.

\begin{proposition}\label{thMeasurements}
Let the initial conditions~\eqref{eqInitialCondition} and~\eqref{eqInitialConditionMirror} and the additional assumption~\eqref{eqNoSignalFromIncomingWave} be satisfied. Then, for all $x\in\mathcal D$, $r\in(-\infty,R)$, and $j\in\{1,2,3\}$ the measurements $M_r$, defined by \eqref{eqEffectiveMeasurements}, fulfil
\begin{subequations}
\begin{align}
M_{r,j}(x) &= \int_{-\infty}^\infty(E_j-E_j^{(0)})(t,x)(E_{r,j}-E_j^{(0)})(t,x)\d t \label{eqEffectiveMeasurementsForwardModel} \\
&= \int_{-\infty}^\infty(\hat E_j-\hat E_j^{(0)})(\omega,x)\overline{(\hat E_{r,j}-\hat E_j^{(0)})(\omega,x)}\d\omega, \label{eqEffectiveMeasurementsForwardModel2}
\end{align}
\end{subequations}
\end{proposition}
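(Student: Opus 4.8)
The plan is to reduce the quadratic measurement functional to a bilinear pairing of the two scattered fields and then pass to the frequency domain via Plancherel's theorem.

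First I would insert the definition \eqref{eqIntensity} of $I_{r,j}$ into \eqref{eqEffectiveMeasurements}. Since $E$ and $E_r$ are real-valued, expanding $\abs{E_j+E_{r,j}}^2 = E_j^2+2E_jE_{r,j}+E_{r,j}^2$ and subtracting the two self-intensities cancels everything but the cross term, so that
\[ M_{r,j}(x)=\int_0^\infty E_j(t,x)\,E_{r,j}(t,x)\,\d t. \]
This is the only place where the specific shape of \eqref{eqEffectiveMeasurements} (the factor $\tfrac12$ and the subtracted self-intensities) enters.

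Next I would recast this as the full-line integral \eqref{eqEffectiveMeasurementsForwardModel} of the product of the scattered fields $E_j-E_j^{(0)}$ and $E_{r,j}-E_j^{(0)}$. The cleanest route is to split $\R$ at $t=0$ and compare the two integrands at a detector point $x\in\mathcal D$. For $t<0$ the initial conditions \eqref{eqInitialCondition} and \eqref{eqInitialConditionMirror} give $E_j-E_j^{(0)}=0$ and $E_{r,j}-E_j^{(0)}=0$, so the product vanishes; for $t\ge0$ the hypothesis \eqref{eqNoSignalFromIncomingWave} gives $E_j^{(0)}(t,x)=0$ on $\mathcal D$, whence $E_j-E_j^{(0)}=E_j$ and $E_{r,j}-E_j^{(0)}=E_{r,j}$ there, and the integrand coincides with $E_jE_{r,j}$. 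Therefore the full-line integral equals $\int_0^\infty E_jE_{r,j}\,\d t$, which is \eqref{eqEffectiveMeasurementsForwardModel}. Equivalently, one may write $E_j=(E_j-E_j^{(0)})+E_j^{(0)}$ and likewise for $E_{r,j}$, expand, and observe that every term carrying a factor $E_j^{(0)}$ evaluated on $\mathcal D$ integrates to zero over $t\ge0$ by \eqref{eqNoSignalFromIncomingWave}.

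Finally, for \eqref{eqEffectiveMeasurementsForwardModel2} I would apply Plancherel's identity to the functions $t\mapsto(E_j-E_j^{(0)})(t,x)$ and $t\mapsto(E_{r,j}-E_j^{(0)})(t,x)$, which are square integrable on $\R$ by \autoref{thPaleyWiener}. Since both functions are real-valued, the transform of one may be traded for the complex conjugate of the other's transform through $\overline{\hat g(\omega)}=\hat g(-\omega)$, which is precisely what produces the conjugation in \eqref{eqEffectiveMeasurementsForwardModel2}. The computation is otherwise routine; the two points demanding care are, first, the bookkeeping that makes all $E^{(0)}$-contributions on $\mathcal D$ vanish — this rests entirely on \eqref{eqNoSignalFromIncomingWave} together with the two initial conditions and would fail without the assumption that the incident beam has left the detector plane by $t=0$ — and, second, matching the normalization of Plancherel's identity to the Fourier convention $\hat f(\omega,x)=\int_{-\infty}^\infty f(t,x)\e^{\i\omega t}\d t$ fixed earlier, the $L^2$-integrability required to justify the exchange of integrals being exactly what \autoref{thPaleyWiener} supplies.
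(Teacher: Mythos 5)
Your proposal is correct and takes essentially the same route as the paper's own proof: expand the square in \eqref{eqIntensity} so the self-intensities cancel, use \eqref{eqNoSignalFromIncomingWave} together with the initial conditions \eqref{eqInitialCondition} and \eqref{eqInitialConditionMirror} to identify the cross term $\int_0^\infty E_jE_{r,j}\,\d t$ with the full-line integral of the scattered fields, and conclude with Plancherel's theorem. The only remark worth adding is that with the stated convention $\hat f(\omega,x)=\int_{-\infty}^\infty f(t,x)\e^{\i\omega t}\d t$, Plancherel's identity carries a factor $\tfrac1{2\pi}$ that neither your argument nor the paper's equation \eqref{eqEffectiveMeasurementsForwardModel2} displays, so this normalization subtlety is shared with the original rather than introduced by you.
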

\begin{proof}
Expanding the function $I_{r,j}$, given by \eqref{eqIntensity}, gives
\[ I_{r,j}(x) = \int_0^\infty\big(|E_j(t,x)|^2+|E_{r,j}(t,x)|^2+2E_j(t,x)E_{r,j}(t,x)\big)\d t. \]
Thus, by the definition \eqref{eqEffectiveMeasurements} of $M_r$, it follows that
\[ M_{r,j}(x) = \int_0^\infty E_j(t,x)E_{r,j}(t,x)\d t, \]
which, using the assumption \eqref{eqNoSignalFromIncomingWave}, can be rewritten in the form 
\[ M_{r,j}(x) = \int_0^\infty(E_j-E_j^{(0)})(t,x)(E_{r,j}-E_j^{(0)})(t,x)\d t. \]
Then, since $E$ and $E_r$ coincide with $E^{(0)}$ for $t<0$, see~\eqref{eqInitialCondition} and~\eqref{eqInitialConditionMirror}, the integration can be extended to all times. This proves the formula \eqref{eqEffectiveMeasurementsForwardModel} for $M_r$. The second formula follows from Plancherel's theorem.
\end{proof}

\section{Solution of the Direct Problem}\label{seSolutionDirectProblem}

In this section the solution of the direct problem, to determine the measurements $M_r$, defined by \eqref{eqEffectiveMeasurementsForwardModel}, from the susceptibility $\chi$,  is derived using Born and far field approximation for the electric field. 

\begin{proposition}
Let $E$ be a solution of the equations~\eqref{eqVectorHelmholtz} and~\eqref{eqInitialCondition}. Then, the Fourier transform $\hat E$ solves the Lippmann--Schwinger integral equation
\begin{equation}\label{eqLippmannSchwinger}
\hat E(\omega,x) = \hat E^{(0)}(\omega,x)+\left(\frac{\omega^2}{c^2} \id +\grad\divx\right)\int_{\R^3}G (\omega, x-y)\hat\chi(\omega,y)\hat E(\omega,y)\d y ,
\end{equation}
where $G$ is the fundamental solution of the Helmholtz equation given by
\[
G (\omega, x) = \frac{\e^{\i\frac\omega c|x|}}{4\pi|x|}, \quad x \neq 0, \, \omega \in \R.
\]
\end{proposition}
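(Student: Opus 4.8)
The plan is to work with the scattered field $\hat E^s := \hat E - \hat E^{(0)}$ and to show that the right-hand side of \eqref{eqLippmannSchwinger} reproduces exactly $\hat E^{(0)} + \hat E^s$. First I would subtract the incident-field equation \eqref{eqVectorHelmholtzIncident} from the vector Helmholtz equation \eqref{eqVectorHelmholtz}. Rewriting \eqref{eqVectorHelmholtz} as $\curl\curl\hat E - \frac{\omega^2}{c^2}\hat E = \frac{\omega^2}{c^2}\hat\chi\hat E$ and using that $\hat E^{(0)}$ is source-free, this yields the inhomogeneous equation
\[
\curl\curl\hat E^s(\omega,x) - \frac{\omega^2}{c^2}\hat E^s(\omega,x) = \frac{\omega^2}{c^2}\hat\chi(\omega,x)\hat E(\omega,x),
\]
so that the scattered field solves a vector Helmholtz equation whose source is supported where $\hat\chi\neq0$, that is, on $\Omega$.

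Next I would reduce this vector equation to the scalar one governed by $G$. Using the identity $\curl\curl = \grad\divx - \laplace$ and the defining property $(\laplace + \frac{\omega^2}{c^2})G(\omega,\cdot) = -\delta$ of the fundamental solution, I would introduce the scalar-convolution potential
\[
u(\omega,x) := \int_{\R^3}G(\omega,x-y)\hat\chi(\omega,y)\hat E(\omega,y)\,\d y,
\]
which satisfies $(\laplace + \frac{\omega^2}{c^2})u = -\hat\chi\hat E$ componentwise. The core step is then a direct verification that $\tilde E^s := (\frac{\omega^2}{c^2}\id + \grad\divx)u$ solves the same inhomogeneous equation as $\hat E^s$. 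Applying $\grad\divx - \laplace - \frac{\omega^2}{c^2}$ to $\tilde E^s$ and commuting the constant-coefficient operators $\grad$, $\divx$, and $\laplace$, all terms containing $\laplace\grad\divx u$ and $\frac{\omega^2}{c^2}\grad\divx u$ cancel, leaving $-\frac{\omega^2}{c^2}(\laplace + \frac{\omega^2}{c^2})u = \frac{\omega^2}{c^2}\hat\chi\hat E$, as required.

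Finally, both $\hat E^s$ and $\tilde E^s$ satisfy the same inhomogeneous vector Helmholtz equation, so their difference is a source-free solution; the assertion $\hat E^s = \tilde E^s$, which is exactly \eqref{eqLippmannSchwinger}, is therefore a uniqueness statement. Here I would invoke \autoref{thPaleyWiener}: the causality built into the initial condition forces $\omega\mapsto\hat E^s(\omega,x)$ to extend holomorphically and square-integrably to the upper half plane $\mathbbm H$, which is precisely the radiation condition singled out by the outgoing fundamental solution $G(\omega,x) = \e^{\i\frac\omega c|x|}/(4\pi|x|)$, since for $\Im\omega > 0$ its kernel decays exponentially in $|x|$, matching the $L^2$ extension. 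I expect this uniqueness --- linking the Paley--Wiener half-plane analyticity to the correct, outgoing branch of the Helmholtz Green's function --- to be the main obstacle, whereas the operator identity in the previous paragraph is routine. The remaining regularity bookkeeping (differentiation under the integral sign, justified since the source lives on the bounded set $\Omega$ and $G$ is locally integrable with the stated singularity) is standard.
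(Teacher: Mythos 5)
Your proposal is correct, and it shares the paper's skeleton --- the incident-plus-scattered decomposition, the reduction of $\curl\curl$ via the identity $\grad\divx-\laplace$, and the use of \autoref{thPaleyWiener} to tie causality to the outgoing kernel $G$ --- but the middle of the argument is organised genuinely differently. The paper proceeds constructively: it takes the divergence of the inhomogeneous equation to eliminate $\divx(\hat E-\hat E^{(0)})$ (equation \eqref{eqDivergenceOfWaveEquation}), arrives at a scalar Helmholtz equation with source $-\frac{c^2}{\omega^2}\grad\divx\phi-\phi$, quotes \cite{ColKre98} for the statement that the solution holomorphic in the upper half plane is the $G$-convolution of that source, and finally integrates by parts to pull $\grad\divx$ outside the integral. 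You instead posit the potential $u=G*(\hat\chi\hat E)$, check by commuting constant-coefficient operators that $(\frac{\omega^2}{c^2}\id+\grad\divx)u$ solves the same inhomogeneous vector equation (your cancellation is right: $\grad\divx\grad\divx=\laplace\grad\divx$ kills the cross terms), and reduce \eqref{eqLippmannSchwinger} to a uniqueness assertion for causal solutions of the homogeneous equation. The two routes outsource the same analytic fact, just packaged differently: yours as an explicit uniqueness theorem --- which you should close by noting that, after inverse Fourier transform, the difference is a solution of the homogeneous wave equation vanishing for $t\le0$, hence zero by uniqueness of the Cauchy problem --- and the paper's hidden inside the representation theorem it cites; so your level of rigour matches the paper's, and the step you flag as the ``main obstacle'' is exactly the step the paper does not prove either. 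What your organisation buys: the algebra is elementary and self-contained, the role of the causality/radiation condition is explicit rather than implicit, and your sign bookkeeping confirms the $+$ sign in \eqref{eqLippmannSchwinger} --- incidentally, the paper's intermediate display carries a factor $-\frac{c^2}{\omega^2}$ that is inconsistent with the convention $(\laplace+\frac{\omega^2}{c^2})G=-\delta$ and with the final formula, an evident typo that your verification route never encounters. What the paper's derivation buys is that the solution formula is produced rather than guessed in advance.
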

\begin{proof}
Equation \eqref{eqVectorHelmholtz} can be rewritten in the form
\[ \curl \curl\hat E (\omega,x)-\frac{\omega^2}{c^2}\hat E(\omega,x) = \phi (\omega,x) \]
with the inhomogeneity
\begin{equation}\label{eqInhomogeneity}
\phi (\omega,x) = \frac{\omega^2}{c^2}\hat\chi(\omega,x)\hat E(\omega,x).
\end{equation}

Using that $\hat E^{(0)}$ solves \eqref{eqVectorHelmholtzIncident}, the difference $\hat E-\hat E^{(0)}$  satisfies the inhomogeneous vector Helmholtz equation
\begin{equation}\label{eqVectorWaveEquation}
\curl \curl(\hat E-\hat E^{(0)}) (\omega,x)-\frac{\omega^2}{c^2}(\hat E-\hat E^{(0)})(\omega,x) = \phi (\omega,x).
\end{equation}
The divergence of this equation, using that $\divx \curl (\hat E-\hat E^{(0)}) =0 ,$ implies 
\begin{equation}\label{eqDivergenceOfWaveEquation}
\divx(\hat E-\hat E^{(0)})(\omega,x) = -\frac{c^2}{\omega^2}\divx \phi (\omega,x).
\end{equation}
Applying the vector identity 
\[ \curl \curl(\hat E-\hat E^{(0)}) = \grad\divx(\hat E-\hat E^{(0)})-\laplace(\hat E-\hat E^{(0)}) \]
in \eqref{eqVectorWaveEquation} and using \eqref{eqDivergenceOfWaveEquation} yields 
\[ \laplace(\hat E-\hat E^{(0)})(\omega,x)+\frac{\omega^2}{c^2}(\hat E-\hat E^{(0)})(\omega,x) = -\frac{c^2}{\omega^2}\grad\divx \phi (\omega,x)-\phi (\omega,x). \]
This is a Helmholtz equation for $\hat E-\hat E^{(0)}$ and the general solution which is (with respect to~$\omega$) holomorphic in the upper half plane (equivalent to \eqref{eqInitialCondition} by \autoref{thPaleyWiener}) is given by, see~\cite{ColKre98}
\begin{align*}
(\hat E-\hat E^{(0)})(\omega,x) &= -\frac{c^2}{\omega^2}\int_{\R^3}G (\omega, x-y)\left(\frac{\omega^2}{c^2}\id+\grady\divy\right)\phi (\omega,y)\d y \\
&= -\frac{c^2}{\omega^2}\left(\frac{\omega^2}{c^2}\id+\grad\divx\right)\int_{\R^3}G (\omega ,x-y)\phi (\omega,y)\d y.
\end{align*}
For the last equality, integration by parts and $\grad G(\omega, x-y) =  - \grady G(\omega, x-y)$ were used. The Lippmann--Schwinger equation~\eqref{eqLippmannSchwinger} follows from the last expression inserting the expression~\eqref{eqInhomogeneity} for $\phi.$
\end{proof}

This integral equation uniquely defines the electric field $E.$ The reader is refered to~\cite{AmmBao01, ColKre98} for the isotropic case and to \cite{Pot00} for an anisotropic medium. 

\subsection{Born and Far Field Approximation}
To solve the Lippmann--Schwinger equation~\eqref{eqLippmannSchwinger}, the medium is assumed to be weakly scattering, which means that $\hat\chi$ is sufficiently small (implying that the difference $E-\hat E^{(0)}$ becomes small compared to $E^{(0)}$) so that the Born approximation $E^{(1)}$, defined by
\begin{equation}\label{eqBornApproximation}
\hat E^{(1)}(\omega,x) = \hat E^{(0)}(\omega,x)+\left(\frac{\omega^2}{c^2} \id+\grad\divx\right)\int_{\R^3}G (\omega, x-y)\hat\chi(\omega,y)\hat E^{(0)}(\omega,y)\d y,
\end{equation}
is considered a good approximation for the electric field $E$, see \cite{BorWol99}. To describe multiple scattering events, one considers higher order Born approximations. For different linearisation techniques the reader is referred to \cite{AmmBao01, Hoh06}.
Moreover, since the detector in OCT is typically quite far away from the sample, one can simplify the expression (\ref{eqBornApproximation}) for the electric field at the detector array by replacing it with its asymptotic behaviour for $|x|\to\infty$, that is replace the formula for $E^{(1)}$ by its far field approximation (the far field approximation could also be applied to the solution $E$ of the Lippmann--Schwinger equation \eqref{eqLippmannSchwinger}). 
\begin{proposition}\label{thAsymptoticExpansionGreenFunction}
Consider, for a given function $\phi :\R^3\to\R^3$ with compact support and some parameter $a\in\R ,$ the function
\[ g:\R^3\to\R^3,\quad g(x) = \int_{\R^3}\frac{\e^{\i a|x-y|}}{|x-y|}\phi (y)\d y. \]
Then, it follows, asymptotically for $\rho\to\infty$ and uniformly in $\vartheta\in S^2,$ that 
\begin{equation}\label{eqAsymptoticExpansionGreenFunction}
(a^2+\grad\divx) g(\rho\vartheta) \simeq-\frac{a^2\e^{\i a\rho}}\rho\int_{\R^3}\vartheta\times(\vartheta\times \phi (y))\e^{-\i a\left<\vartheta,y\right>}\d y
\end{equation}
\end{proposition}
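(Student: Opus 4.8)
The plan is to reduce everything to the classical far-field asymptotics of the scalar Helmholtz kernel, but to carry out the two differentiations in $\grad\divx$ \emph{before} passing to the limit, rather than differentiating an asymptotic relation term by term. The starting point is the expansion of phase and amplitude: writing $x=\rho\vartheta$ with $\abs\vartheta=1$ and using that $\phi$ is supported in a fixed compact set, so that $\abs y$ is bounded on the integration domain, one has $\abs{x-y}=\rho-\inner{\vartheta,y}+\Ord(\rho^{-1})$ and $\abs{x-y}^{-1}=\rho^{-1}+\Ord(\rho^{-2})$, both uniformly in $\vartheta$ and in $y$. This yields the leading scalar asymptotics $g(\rho\vartheta)\simeq\frac{\e^{\i a\rho}}\rho F(\vartheta)$ with the far-field amplitude $F(\vartheta)=\int_{\R^3}\e^{-\i a\inner{\vartheta,y}}\phi(y)\d y$.

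Since $\phi$ has compact support, for $\rho$ large the point $x=\rho\vartheta$ lies outside $\supp\phi$, so the kernel $\frac{\e^{\i a\abs{x-y}}}{\abs{x-y}}$ is smooth in $x$ there and I may differentiate under the integral sign. The key computation is that each $x$-derivative of the kernel contributes, to leading order, a factor $\i a\vartheta_k$: explicitly
\[ \partial_{x_k}\frac{\e^{\i a\abs{x-y}}}{\abs{x-y}}=\e^{\i a\abs{x-y}}\left(\frac{\i a}{\abs{x-y}}-\frac1{\abs{x-y}^2}\right)\frac{(x-y)_k}{\abs{x-y}}, \]
where $\frac{(x-y)_k}{\abs{x-y}}\to\vartheta_k$ and the $\abs{x-y}^{-2}$ term is of lower order. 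Differentiating a second time, the dominant part again comes from hitting the exponential, so $\partial_{x_j}\partial_{x_k}\frac{\e^{\i a\abs{x-y}}}{\abs{x-y}}\simeq-a^2\vartheta_j\vartheta_k\frac{\e^{\i a\rho}}\rho$, all remaining terms being $\Ord(\rho^{-2})$ uniformly in $\vartheta$. Summing over $k$ against $\phi_k$ and invoking the scalar asymptotics then gives $(\grad\divx g)(\rho\vartheta)\simeq-a^2\frac{\e^{\i a\rho}}\rho\,\vartheta\inner{\vartheta,F(\vartheta)}$.

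It remains to assemble the pieces. Adding $a^2g(\rho\vartheta)\simeq a^2\frac{\e^{\i a\rho}}\rho F(\vartheta)$ gives
\[ (a^2\id+\grad\divx)g(\rho\vartheta)\simeq a^2\frac{\e^{\i a\rho}}\rho\big(F(\vartheta)-\vartheta\inner{\vartheta,F(\vartheta)}\big), \]
that is, $a^2\frac{\e^{\i a\rho}}\rho$ times the projection of $F(\vartheta)$ onto the plane orthogonal to $\vartheta$. Finally the double cross-product identity $\vartheta\times(\vartheta\times F)=\inner{\vartheta,F}\vartheta-\abs\vartheta^2F$ together with $\abs\vartheta=1$ turns this projection into $-\vartheta\times(\vartheta\times F(\vartheta))$, and since the map $v\mapsto\vartheta\times(\vartheta\times v)$ is linear it may be pulled inside the integral defining $F$, producing exactly the claimed formula~\eqref{eqAsymptoticExpansionGreenFunction}.

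The main obstacle is not the algebra but the uniformity: because $\grad\divx$ involves two derivatives, one must not differentiate the asymptotic relation for $g$ directly, and one must verify that all the discarded terms — the $\abs{x-y}^{-2}$ contributions and the errors in $\frac{(x-y)_k}{\abs{x-y}}-\vartheta_k$ and in the phase expansion — are genuinely $\Ord(\rho^{-2})$ \emph{uniformly} in $\vartheta\in S^2$. This is precisely where the compact support of $\phi$ enters, since it bounds $\abs y$ uniformly on the domain of integration and keeps $x=\rho\vartheta$ away from the singularity of the kernel for large $\rho$.
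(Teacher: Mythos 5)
Your proof is correct and takes essentially the same route as the paper's: both differentiate the kernel twice \emph{before} passing to the limit, identify the leading contribution $-a^2\vartheta_j\vartheta_k$ times the kernel (all other terms carrying extra inverse powers of $\rho$), use the compact support of $\phi$ to justify differentiation under the integral and the uniformity of the phase expansion $\abs{x-y}=\rho-\langle\vartheta,y\rangle+\Ord(\rho^{-1})$, and conclude with $a^2 g$ plus the double cross-product identity. The only blemish is your displayed second-derivative asymptotics, which as literally written drops the phase factor $\e^{-\i a\langle\vartheta,y\rangle}$ (so its error is $\Ord(\rho^{-1})$, not $\Ord(\rho^{-2})$); since your next step reinstates exactly this factor by ``invoking the scalar asymptotics'' and your final formula contains it inside $F(\vartheta)$, this is a notational compression rather than a gap.
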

\begin{proof}
Consider the function
\[ \Gamma :\R^3\to\C,\quad \Gamma (x) = \frac{\e^{\i a|x|}}{|x|}. \]
Then
\begin{align*}
\frac{\partial^2 \Gamma}{\partial x_j\partial x_k}(x) &= \frac{\partial}{\partial x_j}\left[\left(\frac{\i a}{|x|^2}-\frac1{|x|^3}\right)x_k\e^{\i a|x|}\right] \\
&= \left[\left(\frac{\i a}{|x|^2}-\frac1{|x|^3}\right)\delta_{jk}+\left(\frac{\i a}{|x|^2}-\frac1{|x|^3}\right)\frac{\i ax_jx_k}{|x|}+\left(-2\frac{\i a}{|x|^3}+3\frac1{|x|^4}\right)\frac{x_jx_k}{|x|}\right]\e^{\i a|x|}.
\end{align*}
Therefore, writing $x$ in spherical coordinates: $x=\rho\vartheta$ with $\rho>0$, $\vartheta\in S^2$, for $\rho\to\infty$ uniformly in $\vartheta,$ it can be seen that
\[ \frac{\partial^2 \Gamma}{\partial x_j\partial x_k}(\rho\vartheta) = -\frac{a^2\e^{\i a\rho}}\rho\vartheta_j\vartheta_k+\Ord \left( \frac1{\rho^2}\right) , \]
The approximation (local uniformly in $y\in\R^3)$
\[ |\rho\vartheta-y| = \rho\sqrt{|\vartheta|^2-\frac2\rho\left<\vartheta,y\right>+\frac1{\rho^2}|y|^2} = \rho-\left<\vartheta,y\right>+\Ord \left( \frac1\rho \right) , \]
implies that (again uniformly in $\vartheta\in S^2$)
\[ \frac{\partial^2 \Gamma}{\partial x_j\partial x_k}(\rho\vartheta-y) = -\frac{a^2\e^{\i a(\rho-\left<\vartheta,y\right>)}}\rho\vartheta_j\vartheta_k+\Ord \left( \frac1\rho \right) . \]
Now, considering the compact support of $\phi$ and using that $x\in\R^3\setminus\supp \phi$
\[ (\grad\divx g)_j(x) = \sum_{k=1}^3\frac\partial{\partial x_j}\int_{\R^3}\frac{\partial \Gamma}{\partial x_k}(x-y)\phi_k(y)\d y = \int_{\R^3} \sum_{k=1}^3\frac{\partial^2 \Gamma}{\partial x_j\partial x_k}(x-y)\phi_k(y)\d y. \]
Asymptotically for $|x|\to\infty$ (again using the compact support of $\phi$) one obtains
\[ a^2g_j(\rho\vartheta)+(\grad\divx g)_j(\rho\vartheta) \simeq a^2\int_{\R^3} \sum_{k=1}^3 \frac{\e^{\i a(\rho-\left<\vartheta,y\right>)}}\rho\left(\delta_{jk}-\vartheta_j\vartheta_k\right)\phi_k(y)\d y. \]
The approximation \eqref{eqAsymptoticExpansionGreenFunction} follows from the vector identity $\vartheta\times (\vartheta\times \phi) = \left<\vartheta,\phi \right>\vartheta-|\vartheta|^2 \phi$ and $|\vartheta|=1.$
\end{proof}

The application of both the far field and the Born approximation, this means \autoref{thAsymptoticExpansionGreenFunction} for the expression~\eqref{eqBornApproximation} of $E^{(1)},$ that is setting $a=\omega /c$ and $\phi = \tfrac1{4\pi}\hat \chi \hat E^{(0)}$ in \autoref{thAsymptoticExpansionGreenFunction}, imply the asymptotic behaviour
\begin{equation}\label{eqFarField}
\hat E^{(1)}(\omega,\rho\vartheta) \simeq\hat E^{(0)}(\omega,\rho\vartheta)-\frac{\omega^2\e^{\i\frac\omega c\rho}}{4\pi \rho c^2}\int_{\R^3}\vartheta\times\big(\vartheta\times(\hat\chi(\omega,y)\hat E^{(0)}(\omega,y))\big)\e^{-\i\frac\omega c\left<\vartheta,y\right>}\d y.
\end{equation}

\subsection{The Forward Operator}
To obtain a forward model for the measurements described in \autoref{seMeasurementsGeneral}, the (approximative) formula \eqref{eqFarField} is considered as the model for the solution of the scattering problem. To make this formula concrete, one has to plug in a function $E^{(0)}$ describing the initial illumination (recall that $E^{(0)}$ has to solve~\eqref{eqVectorHelmholtzIncident}).

The specific illumination is a laser pulse propagating in the direction $-e_3$, orthogonal to the detector surface $\mathcal D=\{x\in\R^3\mid x_3=d\}$, this means
\begin{equation}\label{eqInitialIllumination}
E^{(0)}(t,x) = f(t+\tfrac{x_3}c)p,
\end{equation} 
which solves Maxwell's equations \eqref{eqMaxwell} with the assumptions \eqref{eqAssumptionsVacuum} for some fixed vector $p\in \R^3, $ with $p_3=\left<p,e_3\right>=0,$ describing the polarisation of the initial laser beam. 

\begin{proposition}
The function $E^{(0)}$, defined by \eqref{eqInitialIllumination} with $\left<p,e_3\right>=0$, solves together with the magnetic field $H^{(0)}$, defined by
\[ H^{(0)}(t,x) = f(t+\tfrac{x_3}c)p\times e_3, \]
Maxwell's equations \eqref{eqMaxwell} in the vacuum, that is with the additional assumptions \eqref{eqAssumptionsVacuum}.
\end{proposition}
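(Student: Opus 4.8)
The plan is to verify all four of Maxwell's equations~\eqref{eqMaxwell} directly under the vacuum assumptions~\eqref{eqAssumptionsVacuum}. Since in vacuum one has $\rho=0$, $J=0$, $D=E^{(0)}$, and $B=H^{(0)}$, the system reduces to checking the two divergence conditions $\divx E^{(0)}=0$ and $\divx H^{(0)}=0$, Faraday's law $\curl E^{(0)}=-\tfrac1c\frac{\partial H^{(0)}}{\partial t}$, and Amp\`ere's law $\curl H^{(0)}=\tfrac1c\frac{\partial E^{(0)}}{\partial t}$.

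The central observation I would isolate first is that the scalar profile $\psi(t,x):=f(t+\tfrac{x_3}c)$ depends on the spatial variables only through $x_3$, so that $\frac{\partial\psi}{\partial x_1}=\frac{\partial\psi}{\partial x_2}=0$, while the travelling-wave structure yields the chain-rule identity $\frac{\partial\psi}{\partial x_3}=\tfrac1c\,\frac{\partial\psi}{\partial t}$. I would also record the explicit value $p\times e_3=(p_2,-p_1,0)$, which follows from $p=(p_1,p_2,0)$ since $\inner{p,e_3}=0$.

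With these in hand, the two divergence equations are immediate: writing $E^{(0)}=\psi\,p$ and $H^{(0)}=\psi\,(p\times e_3)$, every term in $\divx E^{(0)}$ and $\divx H^{(0)}$ either contains a transverse derivative $\frac{\partial\psi}{\partial x_1}$ or $\frac{\partial\psi}{\partial x_2}$, which vanishes, or is multiplied by the third component $p_3=0$; this is exactly where the polarisation condition $\inner{p,e_3}=0$ enters. For the two curl equations I would compute both sides componentwise, finding $\curl E^{(0)}=\frac{\partial\psi}{\partial x_3}\,(-p_2,p_1,0)$ and $\curl H^{(0)}=\frac{\partial\psi}{\partial x_3}\,(p_1,p_2,0)$, while the time derivatives give $-\tfrac1c\frac{\partial H^{(0)}}{\partial t}=\tfrac1c\frac{\partial\psi}{\partial t}\,(-p_2,p_1,0)$ and $\tfrac1c\frac{\partial E^{(0)}}{\partial t}=\tfrac1c\frac{\partial\psi}{\partial t}\,(p_1,p_2,0)$; the identity $\frac{\partial\psi}{\partial x_3}=\tfrac1c\frac{\partial\psi}{\partial t}$ then matches the two sides in each case.

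I do not expect a genuine obstacle here, as the statement is a direct verification rather than a real difficulty. The only steps requiring care are bookkeeping the cross product and the curl components with the correct signs, and recognising that the hypothesis $\inner{p,e_3}=0$ is precisely what is needed both to annihilate the divergences and to keep the two fields transverse. Everything else collapses onto the single chain-rule identity $\frac{\partial\psi}{\partial x_3}=\tfrac1c\frac{\partial\psi}{\partial t}$.
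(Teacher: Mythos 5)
Your proof is correct and follows essentially the same route as the paper: a direct verification of all four Maxwell equations in vacuum, reducing everything to the chain-rule identity $\partial_{x_3}\psi=\tfrac1c\partial_t\psi$ and the condition $\left<p,e_3\right>=0$. The only cosmetic difference is that you compute componentwise where the paper phrases the same computation via the vector identities $\grad\psi\times p=\tfrac1cf'\,e_3\times p$ and $e_3\times(p\times e_3)=p$.
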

\begin{proof}
The four equations of \eqref{eqMaxwell} can be directly verified:
\begin{align*}
\divx E^{(0)}(t,x) &= \frac1cf'(t+\tfrac{x_3}c)\left<e_3,p\right> = 0, \\
\divx H^{(0)}(t,x) &= \frac1cf'(t+\tfrac{x_3}c)\left<e_3,p\times e_3\right> = 0, \\
\curl E^{(0)}(t,x) &= \frac1cf'(t+\tfrac{x_3}c)e_3\times p = -\frac1c\frac{\partial H^{(0)}}{\partial t}(t,x), \\
\curl H^{(0)}(t,x) &= \frac1cf'(t+\tfrac{x_3}c)e_3\times(p\times e_3) = \frac1cf'(t+\tfrac{x_3}c)p = \frac1c\frac{\partial E^{(0)}}{\partial t}(t,x).
\end{align*}
\end{proof}

To guarantee that the initial field $E^{(0)}$ (and also the magnetic field $H^{(0)}$) does not interact with the sample or the mirror for $t\le0$ and neither contributes to the measurement at the detectors for $t\ge0$ as required by \eqref{eqInitialConditionMirror} and \eqref{eqNoSignalFromIncomingWave} the vertical distribution $f:\R\to\R$ should satisfy (see \autoref{fgMeasurementSetup})
\begin{equation}\label{eqInitialIlluminationPulseLength1} 
\supp f\subset(\tfrac Rc,\tfrac dc). 
\end{equation}

In the case of an illumination $E^{(0)}$ of the form \eqref{eqInitialIllumination}, the electric field $E_r$ produced by an ideal mirror at the position $r$ is given by
\begin{equation}\label{eqReflectedField}
E_r(t,x) = \begin{cases}\big(f(t+\frac{x_3}c)-f(t+ \frac{x_3}c + 2\,\frac{r-x_3}c) \big)p&\text{if}\;x_3>r,\\0&\text{if}\;x_3\le r.\end{cases}
\end{equation}
This just corresponds to the superposition of the initial wave with the (orthogonally) reflected wave, which travels additionally the distance $2\tfrac{x_3 - r}c$. The change in polarisation of the reflected wave (from $p$ to $-p$) 
comes from the fact that the tangential components of the electric field have to be continuous across the border of the mirror. 

The following proposition gives the form of the measurements $M_r$, described in \autoref{seMeasurementsGeneral}, on the detector surface $\mathcal D$ for the specific illumination \eqref{eqInitialIllumination}.

\begin{proposition}\label{thElectricFieldFromMeasurements1}
Let $E^{(0)}$ be an initial illumination of the form~\eqref{eqInitialIllumination}  satisfying \eqref{eqInitialIlluminationPulseLength1}. Then, the equations for the measurements $M_r$ from \autoref{thMeasurements} are given by
\begin{subequations}
\begin{align}
M_{r,j}(x) &=  -p_j\int_{-\infty}^\infty(E_j-E_j^{(0)})(t,x)f(t+\tfrac{2r-x_3}c)\d t, \label{eqMeasurementsPlaneWave1} \\
&= -\frac{p_j}{2\pi}\int_{-\infty}^\infty(\hat E_j-\hat E_j^{(0)})(\omega,x)\hat f(-\omega)\e^{\i\frac\omega c(2r-x_3)}\d\omega \label{eqElectricFieldFromMeasurements1} 
\end{align}
\end{subequations}
for all $j\in\{1,2,3\}$, $r\in (-\infty,R)$, and $x\in\mathcal D$.
\end{proposition}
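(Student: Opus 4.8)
The plan is to specialise the two general expressions for $M_{r,j}$ from \autoref{thMeasurements} to the concrete illumination \eqref{eqInitialIllumination} and the associated reflected field \eqref{eqReflectedField}. The only genuinely new input is the explicit form of $E_r-E^{(0)}$ on the detector plane $\mathcal D$; after that, \eqref{eqMeasurementsPlaneWave1} is just a substitution into \eqref{eqEffectiveMeasurementsForwardModel}, and \eqref{eqElectricFieldFromMeasurements1} follows by Plancherel's theorem.

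First I would evaluate \eqref{eqReflectedField} on $\mathcal D$. Every $x\in\mathcal D$ has $x_3=d$, and the geometry assumption \eqref{eqGeometryAssumptions} gives $r<R<d$, so one is always in the branch $x_3>r$. There the extra phase simplifies, since $t+\tfrac{x_3}c+2\tfrac{r-x_3}c=t+\tfrac{2r-x_3}c$, and subtracting the incident field $E^{(0)}(t,x)=f(t+\tfrac{x_3}c)p$ cancels the leading term, leaving componentwise
\[ (E_{r,j}-E_j^{(0)})(t,x) = -p_j\,f(t+\tfrac{2r-x_3}c),\qquad x\in\mathcal D. \]
Inserting this into the time-domain identity \eqref{eqEffectiveMeasurementsForwardModel} and pulling the constant $-p_j$ out of the integral yields \eqref{eqMeasurementsPlaneWave1} at once.

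For the frequency-domain identity \eqref{eqElectricFieldFromMeasurements1} I would start from \eqref{eqMeasurementsPlaneWave1}, which is an $L^2$-pairing of the two real-valued functions $(E_j-E_j^{(0)})(\cdot,x)$ and $f(\cdot+\tfrac{2r-x_3}c)$, and apply Plancherel's theorem in the form $\int u\,v\,\d t=\tfrac1{2\pi}\int\hat u\,\overline{\hat v}\,\d\omega$ dictated by the convention $\hat f(\omega)=\int f(t)\e^{\i\omega t}\d t$. The shift rule $\widehat{f(\cdot+b)}(\omega)=\e^{-\i\omega b}\hat f(\omega)$ with $b=\tfrac{2r-x_3}c$, combined with $\overline{\hat f(\omega)}=\hat f(-\omega)$ (valid because $f$ is real-valued), turns the conjugated factor into $\e^{\i\frac\omega c(2r-x_3)}\hat f(-\omega)$; collecting terms then reproduces \eqref{eqElectricFieldFromMeasurements1}, prefactor $-\tfrac{p_j}{2\pi}$ included.

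I do not expect a real obstacle: the argument is essentially bookkeeping once the branch $x_3>r$ has been fixed via \eqref{eqGeometryAssumptions}. The single point deserving care is the Fourier normalisation, namely that the factor $\tfrac1{2\pi}$ in \eqref{eqElectricFieldFromMeasurements1} is produced by Plancherel's theorem for the chosen sign convention and must be applied consistently with the convention underlying the second identity \eqref{eqEffectiveMeasurementsForwardModel2} of \autoref{thMeasurements}. Deriving \eqref{eqElectricFieldFromMeasurements1} from the time-domain formula \eqref{eqMeasurementsPlaneWave1}, rather than from \eqref{eqEffectiveMeasurementsForwardModel2} directly, is the cleanest way to keep this factor under control.
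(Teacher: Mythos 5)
Your proposal is correct and follows essentially the same route as the paper: evaluate \eqref{eqReflectedField} on $\mathcal D$ to get $(E_{r,j}-E_j^{(0)})(t,x)=-p_jf(t+\tfrac{2r-x_3}c)$, substitute into \eqref{eqEffectiveMeasurementsForwardModel} to obtain \eqref{eqMeasurementsPlaneWave1}, and then pass to the frequency domain starting from the time-domain formula. The only cosmetic difference is that you invoke Plancherel's theorem with the shift rule where the paper inserts the inverse Fourier representation of $E_j-E_j^{(0)}$ and interchanges the order of integration, which is the same computation; your closing remark about deriving \eqref{eqElectricFieldFromMeasurements1} from \eqref{eqMeasurementsPlaneWave1} rather than from \eqref{eqEffectiveMeasurementsForwardModel2} to keep the $\tfrac1{2\pi}$ factor consistent is exactly what the paper's proof does.
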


\begin{proof}
Since the electric field $E_r$ reflected on a mirror at vertical position $r\in(-\infty,R)$ is according to \eqref{eqReflectedField} given by
\[ E_r(t,x) = \big(f(t+\tfrac{x_3}c)-f(t+\tfrac{2r-x_3}c)\big)p\quad\text{for all}\quad t\in\R,\;x\in\mathcal D, \]
the measurement functions $M_r$ (defined by \eqref{eqEffectiveMeasurements} and computed with  \eqref{eqEffectiveMeasurementsForwardModel}) are simplified, for the particular initial illumination $E^{(0)}$ of the form 
\eqref{eqInitialIllumination}, to \eqref{eqMeasurementsPlaneWave1} for $x \in \mathcal D.$

Since formula \eqref{eqMeasurementsPlaneWave1} is just a convolution, the electric field $E-E^{(0)}$ can be rewritten, in terms of its Fourier transform, in the form
\[ M_{r,j}(x) = -\frac{p_j}{2\pi}\int_{-\infty}^\infty\int_{-\infty}^\infty(\hat E_j-\hat E_j^{(0)})(\omega,x)\e^{-\i\omega t}f(t+\tfrac{2r-x_3}c)\d\omega\d t. \]
Interchanging the order of integration and applying the Fourier transform $\hat f$ of $f,$ it follows equation \eqref{eqElectricFieldFromMeasurements1}.
\end{proof}

In the limiting case of a delta impulse as initial wave, that is for $f(\xi)=\delta(\xi-\xi_0)$ with some constant $\xi_0\in(\frac Rc,\frac dc)$ satisfying \eqref{eqInitialIlluminationPulseLength1}, the measurements provide directly the electric field. 
Indeed, it can be seen from \eqref{eqMeasurementsPlaneWave1} that
\[ M_{r,j}(x) = -p_j(E_j-E_j^{(0)})(\tfrac{x_3-2r}c+\xi_0,x). \] 
By varying $r\in(-\infty,R)$, the electric field $E$ can be obtained (to be more precise, its component in direction of the initial polarisation) as a function of time at every detector position.

The following assumptions are made:
\begin{assumption}\label{enAssumption1}
The susceptibility $\chi$ is sufficiently small so that the Born approximation~$E^{(1)}$ for the solution $E$ of the Lippmann--Schwinger equation~\eqref{eqLippmannSchwinger} can be applied.
\end{assumption}
\begin{assumption}\label{enAssumption2}
The detectors are sufficiently far away from the object so that one can use the far field asymptotics~\eqref{eqFarField} for the measured field.
\end{assumption}

Under these assumptions, one can approximate the electric field by the far field expression of the Born approximation $E^{(1)}$ and plug in the expression in \eqref{eqFarField} to obtain the measurements $M_{r,j}, \, j \in \set{1,2,3}.$

The above analysis, introducing appropriate operators, can then be formulated as an operator equation. The integral equation \eqref{eqFarField} can be formally written as
\[
(\hat E^{(1)} - \hat E^{(0)})(\omega,x) = (\mathcal{K}_0 \hat\chi) (\omega, x),
\]
for a given $\hat E^{(0)}$ where the operator $\mathcal{K}_0  : \hat\chi \mapsto \hat E^{(1)} - \hat E^{(0)}$ is given by
\[
(\mathcal{K}_0 v) (\omega, \rho \vartheta) = -\frac{\omega^2\e^{\i\frac\omega c\rho}}{4\pi \rho c^2}\int_{\R^3}\vartheta\times\big(\vartheta\times(v(\omega,y)\hat E^{(0)}(\omega,y))\big)\e^{-\i\frac\omega c\left<\vartheta,y\right>}\d y, \quad \rho >0, \, \vartheta \in S^2.
\]
It is emphasized that $\hat\chi : \R \times \R^3 \rightarrow \C^{3 \times 3}$ and $E^{(1)} - \hat E^{(0)} : \R \times \R^3 \rightarrow \C^3,$ that is $\mathcal{K}_0 v$ is a function from $\R \times \R^3$ into $\C^3.$ Equivalently, considering the equation \eqref{eqEffectiveMeasurementsForwardModel2}, one has
\[
M (r,x) = \left( M_{r,j} (x) \right)_{j=1}^3 = \big(\mathcal{M}  (\hat E^{(1)} - \hat E^{(0)})\big) (r, x),
\]
where the operator $\mathcal{M}$ is defined by
\[
(\mathcal{M} v) (r, x) = \left( \int_{-\infty}^\infty v_j (\omega,x)\overline{(\hat E_{r,j}-\hat E_j^{(0)})(\omega,x)}\d\omega\right)_{j=1}^3 , \quad x \in \mathcal{D}.
\]
Here, $\mathcal Mv$ is a function from $\R \times \mathcal D$ to $\R^3.$ Thus, combining the operators $\mathcal{K}_0$ and $\mathcal M$, the forward operator $\mathcal{F} : \hat\chi \mapsto M,$ $\mathcal{F}= \mathcal{M} \mathcal{K}_0 $ models the direct problem. The inverse problem of OCT is then formulated as an operator equation
\begin{equation}\label{eqInverseProblem}
\mathcal{F} \hat\chi   =   M.
\end{equation}

For the specific illumination \eqref{eqInitialIllumination}, one has
\begin{equation}\label{incidentfieldfourier}
 \hat E^{(0)}(\omega,x)=\left(\int_{-\infty}^\infty f(t+\tfrac{x_3}c)\e^{\i\omega t}\d t\right)p = \hat f(\omega)\e^{-\i\frac\omega cx_3}p.
\end{equation}
Then, the operators $\mathcal{K}_0$ and $\mathcal M$ simplify to
\begin{equation}\label{eqOperatorKo}
(\mathcal{K}_0 v) (\omega, \rho \vartheta) = -\frac{\omega^2\e^{\i\frac\omega c\rho}}{4\pi \rho c^2} \hat f(\omega) \int_{\R^3}\vartheta\times\big(\vartheta\times(v(\omega,y) \,p)\big)\e^{-\i\frac\omega c\left<\vartheta +e_3,y\right>}\d y
\end{equation}
and, recalling that $M_{r,3}=0$ since $p_3 =0$ (the polarisation in the incident direction is zero), 
\begin{equation}\label{eqOperatorM}
(\mathcal{M} v) (r, x) = \left( - \frac{p_j}{2\pi}\int_{-\infty}^\infty v_j (\omega,x)  \hat f(-\omega) \e^{\i\frac\omega c(2r-x_3)}\d\omega \right)_{j=1}^{2}.
\end{equation}

The operator $\mathcal{K}_0$ is derived from the Born approximation taking into account the far field approximation for the solution of the Lippmann--Schwinger equation~\eqref{eqLippmannSchwinger}. But, one could also neglect \autoref{enAssumption1} and \autoref{enAssumption2} and use the operator $\mathcal{K}$ corresponding to equation \eqref{eqLippmannSchwinger}, that is,
\[
(\mathcal{K} v) (\omega, x) = \left(\frac{\omega^2}{c^2} \id +\grad\divx\right)\int_{\R^3}G (\omega, x-y) v(\omega,y)\hat E(\omega,y)\d y ,
\]
and considering the non-linear forward operator $\mathcal{F}= \mathcal{M} \mathcal{K}.$

The next section focuses on the solution of \eqref{eqInverseProblem}, considering the operators $\mathcal K_0$ and $\mathcal M,$  given by \eqref{eqOperatorKo} and \eqref{eqOperatorM}, respectively. The inversion of $\mathcal{F}$ is performed in two steps, first $\mathcal M$ is inverted and then $\mathcal K_0 .$

\section{The Inverse Scattering Problem}\label{seInverseProblem}

In optical coherence tomography, the susceptibility $\chi$ of the sample is imaged from the measurements $M_r(x)$, $r\in(-\infty,R)$, $x\in\mathcal D$. In a first step, it is shown that the measurements allow us to reconstruct the scattered field on the detector~$\mathcal{D},$ that is inverting the operator \eqref{eqOperatorM}.

The vertical distribution $f:\R\to\R$ should additionally satisfy (see \autoref{fgMeasurementSetup})
\begin{equation}\label{eqInitialIlluminationPulseLength} 
\supp f\subset(\tfrac Rc,\tfrac Rc+\tfrac{2\delta}c)\subset(\tfrac Rc,\tfrac dc)\quad\text{for some}\quad\delta>0 . 
\end{equation} 
This guarantees that the initial field $E^{(0)}$ (and also the magnetic field $H^{(0)}$) not interact with the sample or the mirror for $t\le0$ and neither contribute to the measurement at the detectors for $t\ge0$ as required by \eqref{eqInitialConditionMirror} and \eqref{eqNoSignalFromIncomingWave}. 

The condition that the length of the support of $E^{(0)}$ is at most $2\delta$ (the assumption that the support starts at $\frac Rc$ is only made to simplify the notation) is required for \autoref{thElectricFieldFromMeasurements}. It ensures that the formula \eqref{eqEffectiveMeasurementsForwardModel} for the measurement data $M_r(x)$, $x\in\mathcal D$, vanishes for values $r\ge R$ so that the integral on the right hand side of \eqref{eqElectricFieldFromMeasurements} is only over the interval $(-\infty,R)$ where measurement data are obtained (recall that measurements are only performed for positions $r<R$ of the mirror).

\begin{proposition}\label{thElectricFieldFromMeasurements}
Let $E^{(0)}$ be an initial illumination of the form~\eqref{eqInitialIllumination}  satisfying \eqref{eqInitialIlluminationPulseLength}. Then, the measurements $M_r$ from \autoref{thElectricFieldFromMeasurements1} imply for the electric field $E$:
\begin{equation}\label{eqElectricFieldFromMeasurements}
(\hat E_j-\hat E_j^{(0)})(\omega,x) \overline{\hat f(\omega)}p_j = -\frac2c\int_{-\infty}^R M_{r,j}(x)\e^{-\i\frac\omega c(2r-x_3)}\d r
\end{equation}
for all $j\in\{1,2,3\}$, $\omega\in\R$, and $x\in\mathcal D$.
\end{proposition}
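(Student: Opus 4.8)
We want to invert relation (eqElectricFieldFromMeasurements1) from Proposition~\ref{thElectricFieldFromMeasurements1}, which expresses the measurement $M_{r,j}(x)$ as an integral over $\omega$ of $(\hat E_j-\hat E_j^{(0)})(\omega,x)$ against the kernel $\hat f(-\omega)\e^{\i\frac\omega c(2r-x_3)}$. The right-hand side of the target formula (eqElectricFieldFromMeasurements) is an integral over $r$ of $M_{r,j}(x)$ against $\e^{-\i\frac\omega c(2r-x_3)}$, so the plan is essentially to recognize the $r$-integral as an inverse Fourier transform in the variable $r$ that undoes the $r$-dependence of $M_{r,j}$.

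**The plan.** The key observation is that $M_{r,j}(x)$ depends on $r$ only through the factor $\e^{\i\frac\omega c 2r}$ inside the $\omega$-integral of (eqElectricFieldFromMeasurements1). Substituting (eqElectricFieldFromMeasurements1) into the right-hand side of (eqElectricFieldFromMeasurements) and interchanging the order of integration, the $r$-integral collapses onto the exponentials: one is left with
\[
-\frac2c\int_{-\infty}^R M_{r,j}(x)\e^{-\i\frac\omega c(2r-x_3)}\d r
= \frac{p_j}{\pi c}\int_{-\infty}^\infty(\hat E_j-\hat E_j^{(0)})(\omega',x)\hat f(-\omega')\e^{-\i\frac{\omega'}c x_3}\e^{\i\frac\omega c x_3}\left(\int_{-\infty}^R \e^{\i\frac{2r}c(\omega'-\omega)}\d r\right)\d\omega'.
\]
First I would justify extending the upper limit of the $r$-integration from $R$ to $+\infty$: by the support condition (eqInitialIlluminationPulseLength) together with the geometry assumptions (eqGeometryAssumptions), the field $E-E^{(0)}$ and the reflected field $E_r-E^{(0)}$ have disjoint temporal supports for $r\ge R$ at the detector, so $M_{r,j}(x)=0$ for $r\ge R$ (this is precisely the point emphasized in the paragraph preceding the proposition). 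Once the integral runs over all of $r\in\R$, the inner $r$-integral becomes $\frac{\pi c}{1}\,\delta(\omega'-\omega)$ up to the Jacobian from the substitution $s=\tfrac{2r}c$, namely $\int_{-\infty}^\infty\e^{\i\frac{2r}c(\omega'-\omega)}\d r=\pi c\,\delta(\omega'-\omega)$.

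**Carrying it through.** Evaluating the delta function against the $\omega'$-integral sets $\omega'=\omega$, and the two factors $\e^{-\i\frac{\omega'}c x_3}$ and $\e^{\i\frac\omega c x_3}$ cancel. This leaves exactly $p_j(\hat E_j-\hat E_j^{(0)})(\omega,x)\hat f(-\omega)$. The final step is to match $\hat f(-\omega)$ with the claimed $\overline{\hat f(\omega)}$: since $f:\R\to\R$ is real-valued, the Fourier transform satisfies $\hat f(-\omega)=\overline{\hat f(\omega)}$, which is exactly what appears on the left of (eqElectricFieldFromMeasurements). Tracking the prefactors ($-\tfrac2c$ from the statement, $-\tfrac{p_j}{2\pi}$ from (eqElectricFieldFromMeasurements1), and $\pi c$ from the delta normalization) yields the coefficient $p_j$, completing the identity.

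**The main obstacle.** The computation itself is a routine Fourier inversion; the genuine content — and the step I would be most careful about — is the vanishing $M_{r,j}(x)=0$ for $r\ge R$, which is what legitimizes extending the $r$-integral to the full line and hence produces a clean delta function rather than a truncated (Paley--Wiener-type) kernel. This rests on the pulse-length condition (eqInitialIlluminationPulseLength), and I would verify the temporal-support bookkeeping explicitly: the backscattered part of $E-E^{(0)}$ arrives at the detector no earlier than light that has travelled to the sample (at depth $<R-\delta$) and back, whereas for mirror positions $r\ge R$ the factor $f(t+\tfrac{2r-x_3}c)$ is supported at times too early to overlap, so the product integrates to zero. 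Formalizing this disjointness, rather than the symbolic manipulation, is where the real care is needed.
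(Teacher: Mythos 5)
Your proposal is correct and follows essentially the same route as the paper: first establish $M_{r,j}(x)=0$ for $r\ge R$ by the finite-speed-of-propagation/disjoint-temporal-support argument (backscattered light reaches the detector only for $ct\ge 2\delta+d-R$, while for $r\ge R$ the mirror term $f(t+\tfrac{2r-x_3}{c})$ is supported at earlier times), then extend the $r$-integral to all of $\R$ and invert the Fourier transform in $r$, using $\hat f(-\omega)=\overline{\hat f(\omega)}$ for real $f$. Your explicit delta-function collapse of the double integral is just the formal version of the paper's ``apply the inverse Fourier transform with respect to $r$,'' and your constants work out to the same identity.
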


\begin{proof}
Remark that the formula \eqref{eqMeasurementsPlaneWave1} can be extended to all $r\in\R$ by setting $M_{r,j}(x)=0$ for $r\ge R$. Indeed, from \eqref{eqInitialIlluminationPulseLength} it follows that $E(t,\cdot)=E^{(0)}(t,\cdot)$ for all $t<\frac\delta c$. Since $E$ is a solution of the linear wave equation with constant wave speed $c$ on the half space given by $x_3>R-\delta,$ the difference between $E$ and $E^{(0)}$ caused by the sample needs at least time $\tfrac{d-R+\delta}c$ to travel from the point at $x_3=R-\delta$ to the detector at $x_3=d$, so:
\[ E(t,x)=E^{(0)}(t,x)\quad\text{for all}\quad x\in\R^3\;\text{with}\; x_3=d\;\text{and}\; ct<2\delta+d-R. \]
This means, that the integrand vanishes for $t< \frac{2\delta + d-R}c$. In the case of  $t \ge \frac{2\delta + d-R}c,$ it holds  for $r \ge R$ that 
\[ ct+2r-d \ge 2\delta+d-R+2R-d = R+2\delta, \]
so that $f(t+\frac{2r-d}c)=0$ by the assumption \eqref{eqInitialIlluminationPulseLength} on the support of $f.$ Therefore, for $r\ge R$, always one of the factors in the integrand in \eqref{eqMeasurementsPlaneWave1} is zero which implies that $M_r(x)=0$ for $r\ge R$ and $x\in\mathcal D$.

Thus, equation \eqref{eqElectricFieldFromMeasurements1} holds for all $r \in \R$ and applying the inverse Fourier transform with respect to $r,$ using that $\hat f(-\omega)=\overline{\hat f(\omega)}$ because $f$ is real valued, yields
\[ \frac2c\int_{-\infty}^\infty M_{r,j}(x)\e^{-\i\frac{2\omega r}c}\d r = -p_j(\hat E_j-\hat E_j^{(0)})(\omega,x)\overline{\hat f(\omega)}\e^{-\i\frac\omega cx_3}, \]
which can equivalently be written as \eqref{eqElectricFieldFromMeasurements}.
\end{proof}

This means that one can calculate from the Fourier transform of the measurements $r\mapsto M_r(x)$ at some frequency $\omega$ the Fourier transform of the electric field at $\omega$ as long as the Fourier transform of the initial wave $E^{(0)}$ does not vanish at $\omega$, that is for $\hat f(\omega)\ne0$. Thus, under the \autoref{enAssumption1} and \autoref{enAssumption2}, equation \eqref{eqElectricFieldFromMeasurements} can be solved for the electric field $\hat E$. \autoref{thElectricFieldFromMeasurements} thus provides the inverse of the operator $\mathcal M$ defined by \eqref{eqOperatorM}. Now, the inversion of the operator $\mathcal K_0$ given by \eqref{eqOperatorKo} is performed considering the optical properties of the sample.

\begin{proposition}\label{thSusceptibilityFromMeasurements}
Let $E^{(0)}(t,x)$ be given by the form~\eqref{eqInitialIllumination} with $p_3=0$ and the additional assumption \eqref{eqInitialIlluminationPulseLength}. Then, for every $\omega\in\R\setminus\{0\}$ with $\hat f(\omega)\ne0$, the formula
\begin{equation}\label{eqSusceptibilityFromMeasurements}
p_j\big[\vartheta\times(\vartheta\times\tilde\chi(\omega,\tfrac\omega c(\vartheta+e_3))p)\big]_j \simeq \frac{8\pi\rho c}{\omega^2 |\hat f(\omega)|^2}\int_{-\infty}^R M_{r,j}(\rho\vartheta)\e^{-\i\frac\omega c(2r-\rho(\vartheta_3-1))}\d r
\end{equation}
holds for all $j\in\{1,2\}$, $\vartheta\in S^2_+ :=\{\eta\in S^2\mid\eta_3>0\}$, and $\rho=\frac d{\vartheta_3}$ (asymptotically for $\chi\to0$ and $\rho\to\infty$).

Here $\tilde\chi$ denotes the Fourier transform of $\chi$ with respect to time and space, that is
\begin{equation}\label{eqSpatialFourierTransformSusceptibility}
\tilde\chi(\omega,k) = \int_{-\infty}^\infty\int_{\R^3}\chi(t,x)\e^{-\i\left<k,x\right>}\e^{\i\omega t}\d x\d t =\int_{\R^3} \hat \chi(\omega ,x)\e^{-\i\left<k,x\right>} \d x .
\end{equation}
\end{proposition}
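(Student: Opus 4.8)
The plan is to combine the far field Born expression \eqref{eqFarField} with the inversion of the measurement operator from \autoref{thElectricFieldFromMeasurements}, specialised to a detector point $x=\rho\vartheta\in\mathcal D$. First I would substitute the explicit incident field \eqref{incidentfieldfourier}, namely $\hat E^{(0)}(\omega,y)=\hat f(\omega)\e^{-\i\frac\omega cy_3}p$, into \eqref{eqFarField}. Since the two cross products and the scalar $\hat f(\omega)$ do not depend on the integration variable $y$, they factor out of the spatial integral, leaving
\[ (\hat E^{(1)}-\hat E^{(0)})(\omega,\rho\vartheta)\simeq-\frac{\omega^2\e^{\i\frac\omega c\rho}}{4\pi\rho c^2}\hat f(\omega)\,\vartheta\times\Big(\vartheta\times\Big(\int_{\R^3}\hat\chi(\omega,y)\e^{-\i\frac\omega c\left<\vartheta+e_3,y\right>}\d y\;p\Big)\Big), \]
where I have used $\left<\vartheta,y\right>+y_3=\left<\vartheta+e_3,y\right>$ to merge the two phase factors. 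By the definition \eqref{eqSpatialFourierTransformSusceptibility}, the remaining spatial integral is exactly $\tilde\chi(\omega,\tfrac\omega c(\vartheta+e_3))$, so the right hand side becomes $-\frac{\omega^2\e^{\i\frac\omega c\rho}}{4\pi\rho c^2}\hat f(\omega)\,\vartheta\times(\vartheta\times(\tilde\chi(\omega,\tfrac\omega c(\vartheta+e_3))p))$.

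Second, I would invoke \autoref{thElectricFieldFromMeasurements}: under \autoref{enAssumption1} and \autoref{enAssumption2} the field $\hat E$ may be replaced by its Born approximation $\hat E^{(1)}$, so that, after multiplying the previous display componentwise by $\overline{\hat f(\omega)}\,p_j$ and using $\hat f(\omega)\overline{\hat f(\omega)}=|\hat f(\omega)|^2$, the identity \eqref{eqElectricFieldFromMeasurements} reads
\[ -\frac{\omega^2\e^{\i\frac\omega c\rho}}{4\pi\rho c^2}\,|\hat f(\omega)|^2\,p_j\big[\vartheta\times(\vartheta\times(\tilde\chi(\omega,\tfrac\omega c(\vartheta+e_3))p))\big]_j\simeq-\frac2c\int_{-\infty}^R M_{r,j}(\rho\vartheta)\e^{-\i\frac\omega c(2r-x_3)}\d r. \]
Here I would evaluate on the detector plane, setting $x_3=d$; choosing $\rho=d/\vartheta_3$ for $\vartheta\in S^2_+$ guarantees that $x=\rho\vartheta$ indeed lies on $\mathcal D$ with $x_3=\rho\vartheta_3=d$, which is why the restriction $\vartheta_3>0$ appears.

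Finally I would solve this identity for the scalar $p_j[\vartheta\times(\vartheta\times(\tilde\chi p))]_j$. Cancelling the common minus sign and dividing through by the prefactor yields the constant $\frac{8\pi\rho c}{\omega^2|\hat f(\omega)|^2}$ in front of the measurement integral (this division is exactly what requires $\hat f(\omega)\neq0$), while the exponential $\e^{\i\frac\omega c\rho}$ sitting in the far field prefactor moves to the measurement side as $\e^{-\i\frac\omega c\rho}$ and combines with the measurement phase $\e^{-\i\frac\omega c(2r-\rho\vartheta_3)}$; since $\rho-\rho\vartheta_3=-\rho(\vartheta_3-1)$, the two collapse into the single phase $\e^{-\i\frac\omega c(2r-\rho(\vartheta_3-1))}$, which is precisely \eqref{eqSusceptibilityFromMeasurements}. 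The restriction to $j\in\{1,2\}$ is inherited from $p_3=0$, whence $M_{r,3}=0$. There is no genuine analytical difficulty here—the argument is a substitution followed by bookkeeping—so the only points demanding care are the simultaneous handling of the two independent approximations (far field $\rho\to\infty$ and Born $\chi\to0$, both written $\simeq$) and the correct matching of the three phase factors; getting the phase $\rho(\vartheta_3-1)$ right is the one place where an error would most easily creep in.
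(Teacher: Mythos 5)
Your proposal is correct and follows essentially the same route as the paper's proof: substitute the explicit incident field \eqref{incidentfieldfourier} into the far field Born expression \eqref{eqFarField}, recognise the spatial integral as $\tilde\chi(\omega,\tfrac\omega c(\vartheta+e_3))$ via \eqref{eqSpatialFourierTransformSusceptibility}, and combine with \autoref{thElectricFieldFromMeasurements} to solve for the susceptibility term, with the same phase and constant bookkeeping. The only cosmetic difference is that you multiply the far field identity by $\overline{\hat f(\omega)}\,p_j$ rather than dividing \eqref{eqElectricFieldFromMeasurements} by $p_j$ as the paper does (which needs $p_j\ne0$); this is an equivalent, if anything slightly cleaner, arrangement.
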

\begin{proof}
Because of \eqref{incidentfieldfourier}, the Fourier transform of the electric field $E - E^{(0)}$ can be approximated, using \eqref{eqFarField} with $E \simeq E^{(1)}$ (by \autoref{enAssumption1} and \autoref{enAssumption2}), by
\begin{align*}
(\hat E-\hat E^{(0)})(\omega,\rho\vartheta)&\simeq-\frac{\omega^2\hat f(\omega)\e^{\i\frac\omega c\rho}}{4\pi\rho c^2}\int_{\R^3}\e^{-\i\frac\omega c(\left<y,\vartheta\right>+y_3)}\vartheta\times(\vartheta\times\hat\chi(\omega,y)p)\d y .
\end{align*}
Then, applying \eqref{eqSpatialFourierTransformSusceptibility}, one obtains
\begin{equation}\label{eqelectricfieldcross}
(\hat E-\hat E^{(0)})(\omega,\rho\vartheta)\simeq-\frac{\omega^2\hat f(\omega)\e^{\i\frac\omega c\rho}}{4\pi\rho c^2}\vartheta\times(\vartheta\times\tilde\chi(\omega,\tfrac\omega c(\vartheta+e_3))p).
\end{equation}
From \eqref{eqElectricFieldFromMeasurements}, it is known that, for $\hat f(\omega)\ne0$ and $p_j\ne0,$
\[
(\hat E_j-\hat E_j^{(0)})(\omega,\rho \vartheta) = -\frac2{ p_j c \overline{\hat f(\omega)} }\int_{-\infty}^R M_{r,j}( \rho \vartheta)\e^{-\i\frac\omega c(2r-\rho \vartheta_3)}\d r.
\]
This identity together with \eqref{eqelectricfieldcross}, asymptotically for $\omega\ne0 ,$ yields the statement \eqref{eqSusceptibilityFromMeasurements}.
\end{proof}

To derive reconstruction formulas, \autoref{thSusceptibilityFromMeasurements} is used, which states that from the measurements $M_r$  (under the \autoref{enAssumption1} and \autoref{enAssumption2}) the expression
\begin{equation}\label{eqDataFromMeasurements}
p_j\big[\vartheta\times(\vartheta\times\tilde\chi(\omega,\tfrac\omega c(\vartheta+e_3))p)\big]_j,\quad j=1,2,
\end{equation}
can be calculated. Here,  $p\in\R^2\times\{0\}$ denotes the polarisation of the initial illumination~$E^{(0)}$, see \eqref{eqInitialIllumination},  and $\vartheta\in S^2_+$ is the direction from the origin (where the sample is located) to a detector.

\subsection{The Isotropic Case}\label{seIsotropicCase}
This section analyses the special case of an isotropic medium, meaning that the susceptibility matrix $\chi$ is just a multiple of the unit matrix, so in the following $\chi$ is identified with a scalar. 

Then, from the sum of the measurements $M_{r,1}$ and $M_{r,2},$ using the formula~\eqref{eqSusceptibilityFromMeasurements}, one obtains the expression
\[ \tilde\chi(\omega,\tfrac\omega c(\vartheta+e_3))\left<p,\vartheta\times(\vartheta\times p)\right> = \tilde\chi(\omega,\tfrac\omega c(\vartheta+e_3))(\left<\vartheta,p\right>^2-|p|^2). \]
Since $\left<\vartheta,p\right>^2<|p|^2$ for every combination of $p\in\R^2\times\{0\}$ and $\vartheta\in S^2_+$, one has direct access to the spatial and temporal Fourier transform
\begin{equation}\label{eqMeasurementDataIsotropic}
\tilde\chi(\omega,\tfrac\omega c(\vartheta+e_3)),\quad\omega\in\R\setminus\{0\},\;\vartheta\in S^2_+,
\end{equation}
of $\chi$ in a subset of $\R \times \R^3.$

However, it remains the problem of reconstructing the four dimensional susceptibility data~$\chi$ from the three dimensional measurement data \eqref{eqMeasurementDataIsotropic}. In the following, some different additional assumptions are discussed to compensate the lack of dimension, see \autoref{tablerecon}.

\begin{table}[ht]
\begin{center}
\begin{tabular}{|p{0.34\textwidth}|p{0.4\textwidth}|l|} \hline
{\bf Assumptions} & {\bf Reconstruction method} & {\bf Section} \\ \hline\hline
$\tilde\chi(\omega,k)=\tilde\chi(k)$ & Reconstruction from partial (three dimensional) Fourier data:\newline $\tilde\chi(k)$, $k\in\R^3$, $\measuredangle(k,e_3)\in(-\tfrac\pi4,\tfrac\pi4)$ & \ref{seIsotropicNoFrequency} \\ \hline
$\tilde\chi(\omega,k)=\tilde\chi(k_3)$ & Reconstruction from full (one dimensional) Fourier data: \newline$\tilde\chi(k_3)$, $k_3\in\R\setminus\{0\}$ & \ref{seIsotropicNoFrequencyOneDimensional}  \\ \hline
$\supp\chi(\cdot,x)\subset[0,T]$\newline$\mathcal R(\chi(\tau,\cdot))(\cdot,\varphi)$ is piecewise constant & Recursive formula to get limited angle Radon data \newline $\mathcal R(\chi(\tau,\cdot))(\sigma,\varphi)$, $\sigma\in\R$, $\varphi\in S^2$ \newline with $\measuredangle(\varphi,e_3)\in(-\frac\pi4,\frac\pi4)$ & \ref{seIsotropicDispersive} \\ \hline
$\chi(\tau,x)=\delta(x_1)\delta(x_2)\chi(\tau,x_3)$, \newline$\supp\chi(\cdot,x)\subset[0,T]$, and \newline $\chi(\tau,\cdot)$ is piecewise constant & Recursive formula to \newline reconstruct~$\chi$ & \ref{seIsotropicNoFrequencyOneDimensionalLayered} \\ \hline
\end{tabular}
\end{center}
\caption{Different assumptions about the susceptibility and the corresponding reconstruction formulas. Here, $(\mathcal Rg)(\sigma,\varphi)=\int_{\{x\in\R^3\mid\left<x,\varphi\right>=\sigma\}}g(y)\d s(y)$, $\sigma\in\R$, $\varphi\in S^2$, denotes the Radon transform of a function $g:\R^3\to\R$.}
\label{tablerecon}
\end{table}

\subsubsection{Non-dispersive Medium in Full Field OCT}\label{seIsotropicNoFrequency}
The model is simplified by assuming an immediate reaction of the sample to the exterior electric field in \eqref{eqSusceptibility}. This means that $\chi$ can be considered as a delta distribution in time so that its temporal Fourier transform $\hat\chi$ does not depend on frequency, that is $\hat \chi (\omega, x) = \hat \chi ( x).$ Thus, the reconstruction reduces to the problem of finding $\hat\chi$ from its partial (spatial) Fourier data
\begin{align*}
\tilde\chi(k)\quad\text{for}\quad k\in\{&\tfrac\omega c(\vartheta+e_3)\in\R^3\mid\vartheta\in S^2_+,\;\omega\in\R\setminus\{0\}\} \\
&=\{\kappa\in\R^3\setminus\{0\}\mid\arccos(\langle\tfrac\kappa{|\kappa|},e_3\rangle)\in(-\tfrac\pi4,\tfrac\pi4)\}.
\end{align*}

Thus, only the Fourier data of $\chi$ in the right circular cone $\mathcal C$ with axis along $e_3$ and aperture~$\frac\pi2$ are observed (see \autoref{fgFourierData}). In practice, these data are usually only available for a small range of frequencies $\omega$.

Inverse scattering for full field OCT, under the Born approximation, has been considered by Marks \textit{et al} \cite{MarDavBopCar09, MarRalBopCar07} where algorithms to recover the scalar susceptibility were proposed.

\begin{figure}[ht]
\begin{center}
\includegraphics{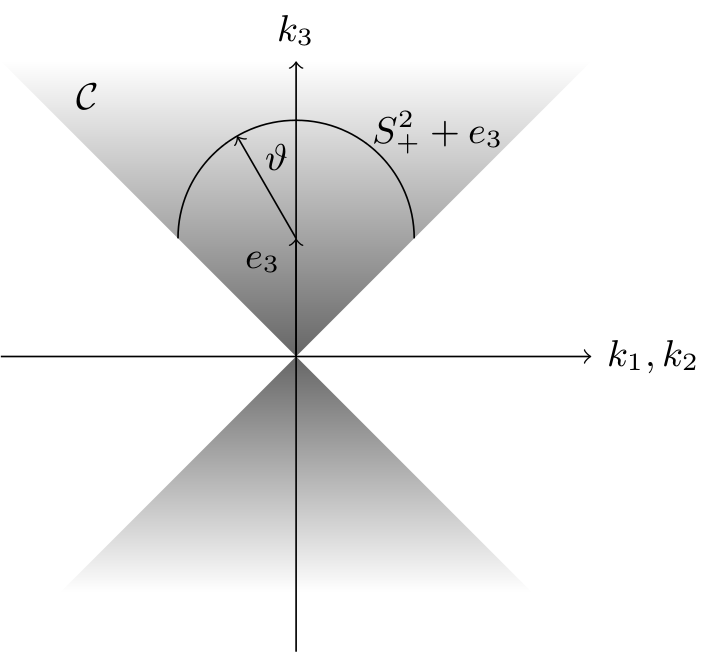}
\caption{Region $\mathcal C\cong\R \times(S^2_++e_3)$ of the available Fourier data of $\chi.$}
\label{fgFourierData}
\end{center}
\end{figure}

\subsubsection{Non-dispersive Medium with Focused Illumination}\label{seIsotropicNoFrequencyOneDimensional}
In standard OCT, the illumination is focused to a small region inside the object so that the function $\chi$ can be assumed to be constant in the directions $e_1$ and $e_2$ (locally the illumination is still assumed to be properly described by a plane wave). Then, the problem can be reduced by two dimensions assuming that the illumination is described by a delta distribution in these two directions. As before, $\chi$ is assumed to be frequency independent, so that
\[ \hat\chi(\omega,x)=\delta(x_1)\delta(x_2)\hat\chi(x_3), \]
this means that the spatial and temporal Fourier transform \eqref{eqSpatialFourierTransformSusceptibility} fulfils $\tilde \chi (\omega, k) = \tilde \chi (k_3).$  In this case, the two dimensional detector array can be replaced by a single point detector located at $de_3$.

Then, the measurement data \eqref{eqMeasurementDataIsotropic} in direction $\vartheta=e_3$ provide the Fourier transform
\[ \tilde\chi(\tfrac{2\omega}c) \quad\text{for all}\quad\omega\in\R\setminus\{0\}. \]

Therefore, the reconstruction of the (one dimensional) susceptibility $x_3\mapsto\hat\chi(x_3)$ can be simply obtained by an inverse Fourier transform.

This one dimensional analysis, has been used initially by Fercher \textit{et al} \cite{FerHitKamZai95}, reviewed in~\cite{Fer96} and by Hellmuth \cite{Hel96} to describe time domain OCT. Ralston \textit{et al} \cite{RalMarKamBop05, RalMarCarBop06} described the OCT system using a single backscattering model. The solution was given through numerical simulation using regularised least squares methods.

\subsubsection{Dispersive Medium}\label{seIsotropicDispersive}
However, in the case of a dispersive medium, that is frequency dependent $\hat \chi,$ the difficulty is to reconstruct the four dimensional function $\chi:\R\times\R^3\to\C$ from the three dimensional data
\begin{equation}\label{eqMeasurementsIsotropic}
\hat m:\R\times S_+^2\to\C,\quad\hat m(\omega,\vartheta) = \tilde\chi(\omega,\tfrac\omega c(\vartheta+e_3)).
\end{equation}

\begin{lemma}\label{thSusceptibilityParametrisation}
Let $\hat m$ be given by \eqref{eqMeasurementsIsotropic}. Then its inverse Fourier transform $m:\R\times S_+^2\to\C$ with respect to the first variable is given by
\begin{equation}\label{eqMeasurementsIsotropicDispersion}
m(t,\vartheta) = \frac c{\sqrt{2(1+\vartheta_3)}}\int_{-\infty}^\infty \bar\chi(\tau; \tau-t,\vartheta) \d\tau,\quad t\in\R,\;\vartheta\in S^2_+,
\end{equation}
where \[\bar\chi(\tau;\sigma,\vartheta)=\int_{E_{\sigma,\vartheta}}\chi(\tau,y)\d s(y), \quad \tau, \sigma  \in \R, \,\vartheta \in S^2_+ ,\]
and $E_{\sigma,\vartheta}$ denotes the plane
\begin{equation}\label{eqPlane}
E_{\sigma,\vartheta} = \{y\in\R^3\mid\left<\vartheta+e_3,y\right>=c\sigma\},\quad\sigma\in\R,\;\vartheta\in S^2_+.
\end{equation}
\end{lemma}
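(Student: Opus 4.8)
The plan is to compute the inverse Fourier transform $m(t,\vartheta)=\tfrac1{2\pi}\int_{-\infty}^\infty\hat m(\omega,\vartheta)\e^{-\i\omega t}\d\omega$ directly, inserting the space--time Fourier representation \eqref{eqSpatialFourierTransformSusceptibility} of $\tilde\chi$. Since $\hat m(\omega,\vartheta)=\tilde\chi(\omega,\tfrac\omega c(\vartheta+e_3))$, this yields
\[ m(t,\vartheta)=\frac1{2\pi}\int_{-\infty}^\infty\int_{-\infty}^\infty\int_{\R^3}\chi(\tau,y)\,\e^{\i\omega\left(\tau-t-\frac1c\left<\vartheta+e_3,y\right>\right)}\d y\,\d\tau\,\d\omega. \]

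First I would justify interchanging the order of integration (Fubini, under suitable regularity of $\chi$, for instance compact support and smoothness in $x$) so that the $\omega$-integral can be performed first. Recognising $\tfrac1{2\pi}\int_{-\infty}^\infty\e^{\i\omega s}\d\omega=\delta(s)$ in the distributional sense, the $\omega$-integration collapses the oscillatory phase into a Dirac delta and gives
\[ m(t,\vartheta)=\int_{-\infty}^\infty\int_{\R^3}\chi(\tau,y)\,\delta\!\left(\tau-t-\tfrac1c\left<\vartheta+e_3,y\right>\right)\d y\,\d\tau. \]

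Next, for each fixed $\tau$, I would evaluate the inner integral over $y$ by the coarea formula. Setting $h(y)=\tau-t-\tfrac1c\left<\vartheta+e_3,y\right>$, the level set $\{h=0\}$ is exactly the plane $E_{\tau-t,\vartheta}$ of \eqref{eqPlane}, and $\grad h=-\tfrac1c(\vartheta+e_3)$ is constant, so $\abs{\grad h}=\tfrac1c\abs{\vartheta+e_3}$. The geometric identity $\abs{\vartheta+e_3}^2=\abs\vartheta^2+2\left<\vartheta,e_3\right>+\abs{e_3}^2=2(1+\vartheta_3)$ (using $\abs\vartheta=1$) then gives $\abs{\grad h}=\tfrac1c\sqrt{2(1+\vartheta_3)}$, which is strictly positive for $\vartheta\in S^2_+$. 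Hence $\int_{\R^3}\chi(\tau,y)\delta(h(y))\d y=\frac c{\sqrt{2(1+\vartheta_3)}}\int_{E_{\tau-t,\vartheta}}\chi(\tau,y)\d s(y)=\frac c{\sqrt{2(1+\vartheta_3)}}\bar\chi(\tau;\tau-t,\vartheta)$. Substituting this back and pulling the $\tau$-independent prefactor out of the remaining integral over $\tau$ produces exactly \eqref{eqMeasurementsIsotropicDispersion}.

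The main obstacle I anticipate is making the delta-function step rigorous rather than merely formal: one should either argue consistently within tempered distributions or regularise the kernel $\e^{\i\omega s}$ and pass to the limit, while checking that the Fubini interchange is licensed by adequate decay or support hypotheses on $\chi$. By contrast, the geometric evaluation of $\abs{\vartheta+e_3}$ and the resulting normalising factor $\tfrac c{\sqrt{2(1+\vartheta_3)}}$ are routine once the coarea reduction to the plane $E_{\tau-t,\vartheta}$ is in place.
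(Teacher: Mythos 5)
Your proof is correct and reaches the same identity, but it runs on different machinery than the paper's. After the common first step (writing $m$ as the inverse Fourier transform of $\hat m$, inserting \eqref{eqSpatialFourierTransformSusceptibility}, and invoking Fubini), you integrate out $\omega$ via the distributional identity $\tfrac1{2\pi}\int\e^{\i\omega s}\d\omega=\delta(s)$ and then evaluate the $y$-integral against $\delta\bigl(\tau-t-\tfrac1c\left<\vartheta+e_3,y\right>\bigr)$ by the coarea formula, producing the factor $1/\abs{\grad h}=c/\sqrt{2(1+\vartheta_3)}$. The paper instead never introduces a delta: it keeps the temporal Fourier transform $\hat\chi(\omega,x)$ intact, so the $\omega$-integral is a genuine inverse Fourier transform (a shift), giving $m(t,\vartheta)=\int_{\R^3}\chi\bigl(t+\tfrac1c\left<\vartheta+e_3,x\right>,x\bigr)\d x$ directly; it then substitutes $x_3\mapsto\tau$ and parametrises the plane $E_{\tau-t,\vartheta}$ explicitly, with the normalising constant emerging as the Jacobian $\tfrac c{1+\vartheta_3}$ times the reciprocal Gram factor $\sqrt{\tfrac{1+\vartheta_3}2}$. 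The trade-off is exactly the one you flag yourself: the paper's route is distribution-free and needs only classical Fourier inversion plus a change of variables, whereas your route is formally shorter but owes a justification of the delta step (tempered distributions or a regularisation argument). A small remark: if in your delta-collapsed expression you integrate out $\tau$ against the delta instead of $y$, you recover verbatim the paper's intermediate formula, so the two arguments are equivalent up to which variable absorbs the constraint; your choice of absorbing it in $y$ is what forces the coarea formula, while the paper's choice reduces it to a one-dimensional substitution.
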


\begin{figure}[ht]
\begin{center}
\includegraphics{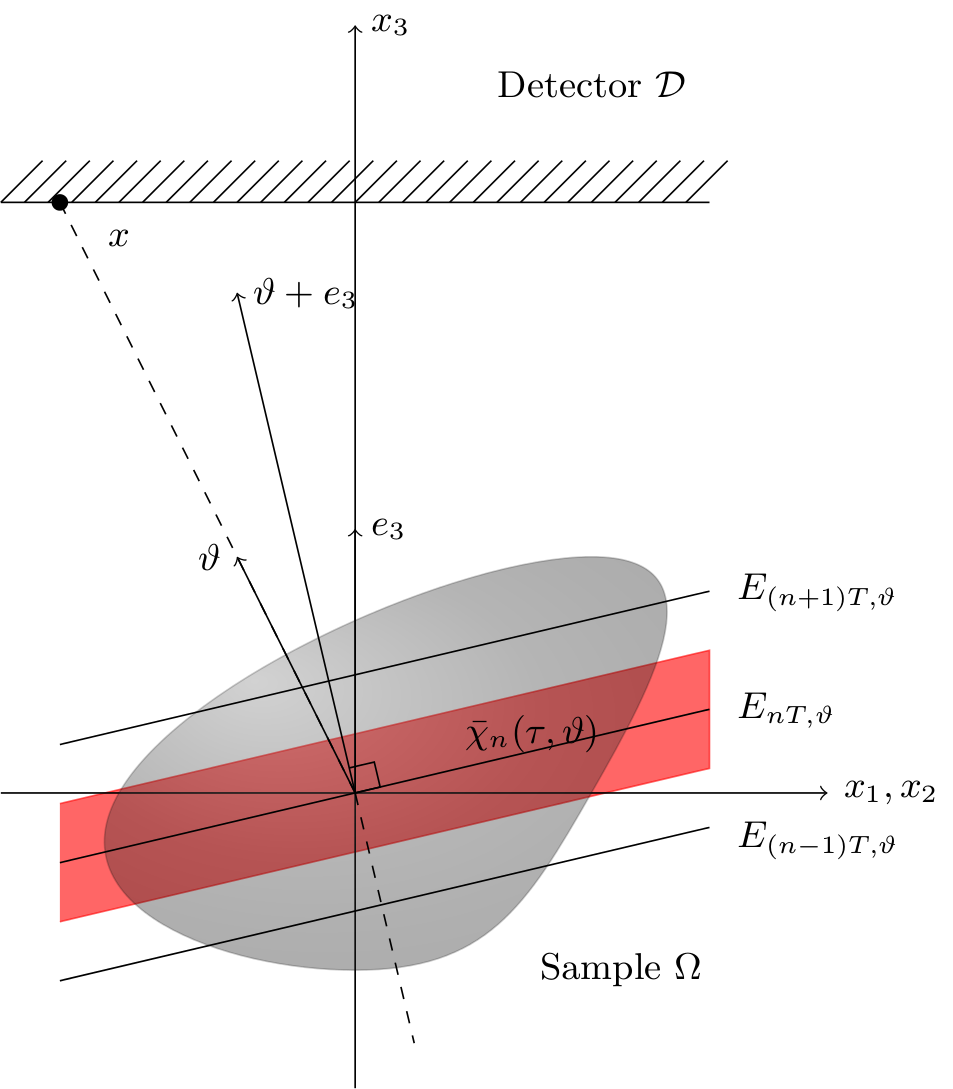}
\caption{Discretisation of $\chi$ with respect to the detection points.}
\label{fgDiscretisationRadonData}
\end{center}
\end{figure}

\begin{proof}
Taking the inverse temporal Fourier transform of $\hat m$ and using \eqref{eqSpatialFourierTransformSusceptibility}, it follows that
\begin{align*}
m(t,\vartheta) &= \frac1{2\pi}\int_{-\infty}^\infty\tilde\chi(\omega,\tfrac\omega c(\vartheta+e_3)) \e^{-\i\omega t} \d\omega \\
&= \frac1{2\pi}\int_{-\infty}^\infty\int_{\R^3}\hat\chi(\omega,x)\e^{-\i\frac\omega c\left<\vartheta+e_3,x\right>}\e^{-\i\omega t}\d x\d\omega.
\end{align*}
Interchanging the order of integration, the integral over $\omega$ is again described by an inverse Fourier transform and the previous equation becomes
\[ m(t,\vartheta) = \int_{\R^3}\chi(t+\tfrac1c\left<\vartheta+e_3,x\right>,x)\d x. \]

Substituting then the variable $x_3$ by $\tau=t+\frac1c\left<\vartheta+e_3,x\right>$, this can be written as
\begin{equation}\label{eqMeasurementsReparametrisation}
m(t,\vartheta) = \frac c{1+\vartheta_3}\int_{-\infty}^\infty\int_{-\infty}^\infty\int_{-\infty}^\infty\chi(\tau,\psi_{\tau-t,\vartheta}(x_1,x_2))\d x_1\d x_2\d\tau
\end{equation}
with the function
\[ \psi_{\sigma,\vartheta}:\R^2\to\R^3,\quad\psi_{\sigma,\vartheta}(x_1,x_2)=\left(x_1,x_2,\frac{c\sigma}{1+\vartheta_3}-\frac{\vartheta_1x_1+\vartheta_2x_2}{1+\vartheta_3}\right). \]
Now, $\psi_{\sigma,\vartheta}$ is seen to be the parametrisation of the plane
\[ E_{\sigma,\vartheta} = \{y\in\R^3\mid\left<v_\vartheta,y\right>=a_{\sigma,\vartheta}\}\quad\text{with}\quad v_\vartheta=\begin{pmatrix}\frac{\vartheta_1}{1+\vartheta_3}\\\frac{\vartheta_2}{1+\vartheta_3}\\1\end{pmatrix}\quad\text{and}\quad a_{\sigma,\vartheta}=\frac{c\sigma}{1+\vartheta_3}, \]
see \autoref{fgDiscretisationRadonData}.
The square root of the Gram determinant of the parametrisation $\psi_{\sigma,\vartheta}$ is now given by the length of the vector $v_\vartheta=\frac{\partial\psi_{\sigma,\vartheta}}{\partial x_1}\times\frac{\partial\psi_{\sigma,\vartheta}}{\partial x_2}$, which implies that 
\[ \int_{-\infty}^\infty\int_{-\infty}^\infty\chi(\tau,\psi_{\tau-t,\vartheta}(x_1,x_2))\d x_1\d x_2 = \sqrt{\frac{1+\vartheta_3}2}\int_{E_{\tau-t,\vartheta}}\chi(\tau,y)\d s(y). \]
Plugging this into \eqref{eqMeasurementsReparametrisation} yields the claim.
\end{proof}

Thus, the measurements give the combination \eqref{eqMeasurementsIsotropicDispersion} of values $\bar\chi$ of the Radon transform of the function $\chi(\tau,\cdot)$. It seems, however, impossible to recover the values $\bar\chi(\tau;\sigma,\vartheta)$ from this combination
\[ m(t,\vartheta) = \frac c{\sqrt{2(1+\vartheta_3)}}\int_{-\infty}^\infty\bar\chi(\tau;\tau-t,\vartheta)\d\tau, \]
since (for every fixed angle $\vartheta\in S^2_+$) one would have to reconstruct a function on $\R^2$ from one dimensional data.

To overcome this problem, the function $\bar\chi(\tau;\cdot,\vartheta)$ is going to be discretised for every $\tau\in\R$ and $\vartheta\in S^2_+$, where the step size will depend on the size of the support of $\chi(\cdot,x)$. 

Let us therefore consider the following assumption.
\begin{assumption}\label{eqSupportChi}
The support of $\chi$ in the time variable is contained in a small interval $[0,T]$ for some $T>0$:
\begin{equation*}
\supp\chi(\cdot,x)\subset[0,T]\quad\text{for all}\quad x\in\R^3.
\end{equation*}
\end{assumption}

Then, the following discretisation 
\[ \bar\chi_n(\tau,\vartheta) = \int_{E_{nT,\vartheta}}\chi(\tau,y)\d s(y),\quad n\in\Z,\;\tau\in(0,T),\;\vartheta\in S^2_+, \]
of the Radon transform of the functions $\chi(\tau,\cdot)$ is considered, where $E_{\sigma,\vartheta}$ denotes the plane defined in~\eqref{eqPlane}.

\begin{assumption}\label{eqDiscretisationAssumption}
The value $\bar\chi_n(\tau,\vartheta)$ is a good approximation for the integral of the function $\chi(\tau,\cdot)$ over the planes $E_{nT+\varepsilon,\vartheta}$ for all $\varepsilon\in[-\frac T2,\frac T2)$ (see \autoref{fgDiscretisationRadonData}), that is
\begin{equation*}
\bar\chi_n(\tau,\vartheta) \approx \int_{E_{nT+\varepsilon,\vartheta}}\chi(\tau,y)\d s(y),\quad \varepsilon\in[-\tfrac T2,\tfrac T2),\;n\in\Z,\;\tau\in(0,T),\;\vartheta\in S^2_+ .
\end{equation*}
\end{assumption}
Under the \autoref{eqDiscretisationAssumption}, equation \eqref{eqMeasurementsIsotropicDispersion} can be rewritten in the form
\[ m(t,\vartheta)\approx\frac c{\sqrt{2(1+\vartheta_3)}}\int_0^T\bar\chi_{N(\tau-t)}(\tau,\vartheta)\d\tau, \]
where $N(\sigma)=\left\lfloor\frac\sigma T+\frac12\right\rfloor$ denotes the integer closest to $\frac\sigma T$.
This (approximate) identity can now be iteratively solved for $\bar\chi$.

\begin{proposition}\label{reconstructionformula}
Let
\[ \bar m(t,\vartheta) = \frac c{\sqrt{2(1+\vartheta_3)}}\int_0^T\bar\chi_{N(\tau-t)}(\tau,\vartheta)\d\tau,\quad \vartheta\in S^2_+,\;t\in\R, \]
for some constant $T>0$ with the integer valued function $N(\sigma)=\left\lfloor\frac\sigma T+\frac12\right\rfloor$.

Then, $\bar\chi$ fulfils the recursion relation
\begin{equation}\label{eqRecursionRelationSusceptibility}
\bar\chi_n(\tau,\vartheta) = \bar\chi_{n+1}(\tau,\vartheta)+\frac{\sqrt{2(1+\vartheta_3)}}c\frac{\partial\bar m}{\partial t}(\tau-(n+\tfrac12)T,\vartheta),\quad n\in\Z,\;\tau\in(0,T),\;\vartheta\in S^2_+.
\end{equation}
\end{proposition}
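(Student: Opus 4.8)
The plan is to solve the defining relation for $\bar m$ as a telescoping sum by exploiting the piecewise-constant structure of the integrand $\tau \mapsto \bar\chi_{N(\tau-t)}(\tau,\vartheta)$ in the shift variable $t$. Fix $\vartheta\in S^2_+$ and abbreviate $\beta = \tfrac{c}{\sqrt{2(1+\vartheta_3)}}$, so that $\bar m(t,\vartheta) = \beta^{-1}\int_0^T \bar\chi_{N(\tau-t)}(\tau,\vartheta)\,\d\tau$ (absorbing the constant appropriately; note I will instead keep $\beta$ as the prefactor and divide at the end). The key observation is that $N(\sigma)=\lfloor \tfrac\sigma T + \tfrac12\rfloor$ is a step function that jumps by $+1$ precisely when $\sigma$ crosses a half-integer multiple $(n+\tfrac12)T$ of $T$. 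Hence, for fixed $\tau$, the index $N(\tau-t)$ changes by exactly $-1$ as $t$ increases through the value $\tau-(n+\tfrac12)T$.

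First I would differentiate $\bar m$ with respect to $t$. Since $\bar\chi_{N(\tau-t)}(\tau,\vartheta)$ is, for each $\tau$, constant in $t$ except for jump discontinuities at $t = \tau-(n+\tfrac12)T$, its $t$-derivative in the distributional sense is a sum of Dirac masses. Concretely, as $t$ increases past $\tau-(n+\tfrac12)T$ the integrand jumps from $\bar\chi_{n+1}(\tau,\vartheta)$ to $\bar\chi_{n}(\tau,\vartheta)$, contributing a delta of weight $\bar\chi_n(\tau,\vartheta)-\bar\chi_{n+1}(\tau,\vartheta)$. Integrating this over $\tau\in(0,T)$ and using the sifting property of the delta distribution, the $\tau$-integral collapses: for the evaluation point $t_0 = \tau_0-(n+\tfrac12)T$ to fall in the jump locus with $\tau_0\in(0,T)$, one needs $\tau_0 = t_0+(n+\tfrac12)T$, which pins down a single $\tau$. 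This should yield
\[
\frac{\partial \bar m}{\partial t}(t,\vartheta) = \beta^{-1}\big(\bar\chi_n(\tau,\vartheta)-\bar\chi_{n+1}(\tau,\vartheta)\big)\Big|_{\tau = t+(n+\tfrac12)T},
\]
evaluated at the $n$ for which $\tau\in(0,T)$. Rearranging and substituting $t = \tau-(n+\tfrac12)T$ gives exactly the recursion \eqref{eqRecursionRelationSusceptibility}.

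Rather than invoking distributions, the cleaner route I would actually carry out is to split the $\tau$-integral according to the level sets of $N(\tau-t)$ and differentiate the resulting finite/locally-finite sum of integrals over subintervals directly, applying the fundamental theorem of calculus to the moving endpoints. Writing $\bar m(t,\vartheta) = \beta^{-1}\sum_{n\in\Z}\int_{I_n(t)}\bar\chi_n(\tau,\vartheta)\,\d\tau$, where $I_n(t) = (0,T)\cap\{\tau : N(\tau-t)=n\} = (0,T)\cap(t+(n-\tfrac12)T,\,t+(n+\tfrac12)T]$, each endpoint moves at unit speed in $t$, so $\tfrac{\partial}{\partial t}$ produces boundary terms $\pm\bar\chi_n$ evaluated at the endpoints, and neighbouring boundary terms combine into the difference $\bar\chi_n-\bar\chi_{n+1}$ at the shared endpoint $\tau = t+(n+\tfrac12)T$.

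The main obstacle is purely bookkeeping: correctly tracking which term of the telescoping sum survives and confirming that the surviving evaluation respects the constraint $\tau\in(0,T)$, so that reindexing by the substitution $t = \tau-(n+\tfrac12)T$ lands the recursion at the stated argument $\tau-(n+\tfrac12)T$ of $\tfrac{\partial\bar m}{\partial t}$. One must also verify the edge behaviour at $\tau\in\{0,T\}$ and at the half-integer jump points does not contribute spurious terms, which follows from \autoref{eqSupportChi} since $\bar\chi_n(\tau,\vartheta)$ is supported in $\tau\in(0,T)$ and the half-open convention in $N$ assigns each jump unambiguously to one interval. No genuine analytic difficulty arises beyond this careful accounting of the piecewise-constant jumps.
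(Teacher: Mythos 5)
Your proposal is correct and is essentially the paper's own argument: the paper also splits the $\tau$-integral at the unique breakpoint of $N(\tau-t)$ inside $(0,T)$ (writing $t=-nT+\varepsilon$, $\varepsilon\in[-\tfrac T2,\tfrac T2)$, so the integrand is $\bar\chi_n$ on $(0,\tfrac T2+\varepsilon)$ and $\bar\chi_{n+1}$ on $[\tfrac T2+\varepsilon,T)$) and differentiates the moving endpoint to obtain the jump $\bar\chi_n-\bar\chi_{n+1}$, which is exactly your level-set/Leibniz computation. The only blemish is a self-acknowledged constant-tracking slip: with $\bar m=\beta\int_0^T\cdots$ and $\beta=\tfrac{c}{\sqrt{2(1+\vartheta_3)}}$, your intermediate display should read $\tfrac{\partial\bar m}{\partial t}=\beta\big(\bar\chi_n-\bar\chi_{n+1}\big)$, not $\beta^{-1}(\cdots)$; this cancels in the final recursion.
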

\begin{proof}
Let $t=-nT+\varepsilon$ with $\varepsilon\in[-\frac T2,\frac T2)$ and $n\in\Z$, then
\[ N(\tau-t)=\begin{cases}n&\text{if}\;\tau\in(0,\tfrac T2+\varepsilon),\\n+1&\text{if}\;\tau\in[\tfrac T2+\varepsilon,T).\end{cases} \]
Taking an arbitrary $\varepsilon\in[-\frac T2,\frac T2)$ and $n\in\Z,$ $\bar m$ can be formulated as
\[ \bar m(-nT+\varepsilon,\vartheta) = \frac c{\sqrt{2(1+\vartheta_3)}}\left(\int_0^{\frac T2+\varepsilon}\bar\chi_n(\tau,\vartheta)\d\tau+\int_{\frac T2+\varepsilon}^T\bar\chi_{n+1}(\tau,\vartheta)\d\tau\right). \]
Differentiating this equation with respect to $\varepsilon$, it follows that
\[ \frac{\partial\bar m}{\partial t}(-nT+\varepsilon,\vartheta) = \frac c{\sqrt{2(1+\vartheta_3)}}\left(\bar\chi_n(\tfrac T2+\varepsilon,\vartheta)-\bar\chi_{n+1}(\tfrac T2+\varepsilon,\vartheta)\right), \]
which (with $\tau=\frac T2+\varepsilon$) is equivalent to \eqref{eqRecursionRelationSusceptibility}.
\end{proof}

Thus, given that $\bar\chi_n(t,\vartheta)=0$ for sufficiently large $n\in\Z$ (recall that $\supp\chi(\tau,\cdot)\subset\Omega$ for all $\tau\in(0,T)$), one can recursively reconstruct $\bar\chi$, to obtain the data
\[ \int_{E_{\sigma,\vartheta}}\chi(\tau,y)\d s(y)\quad\text{for all}\quad\tau\in[0,T),\;\sigma\in\R,\;\vartheta\in S^2_+ \]
for the Radon transform of $\chi(\tau,\cdot)$. 

However, since the plane $E_{\sigma,\vartheta}$ is by its definition \eqref{eqPlane} orthogonal to the vector $\vartheta+e_3$ for $\vartheta\in S^2_+$, this provides only the values of the Radon transform corresponding to planes which are orthogonal to a vector in the cone $\mathcal C$, see \autoref{fgFourierData}. For the reconstruction, one therefore still has to invert a limited angle Radon transform.

\subsubsection{Dispersive Layered Medium with Focused Illumination}\label{seIsotropicNoFrequencyOneDimensionalLayered}

Except from ophthalmology, OCT is also widely used for investigation of
skin deceases, such as cancer. From the mathematical point of view, this simplifies the main model since the human skin can be described as a multi-layer structure with different optical properties and varying thicknesses in each layer. 

As in \autoref{seIsotropicNoFrequencyOneDimensional}, the incident field is considered to propagate with normal incidence to the interface $x_3 =L$ and the detector array is replaced by a single point detector located at $d e_3.$ The susceptibility is simplified as
\[ \chi(t,x)=\delta(x_1)\delta(x_2) \chi(t, x_3), \]
and therefore the measurements provide the data, see \eqref{eqMeasurementsIsotropicDispersion} with  $\tilde\chi (\omega, k) = \tilde\chi (\omega, k_3),$
\begin{equation*}\label{datafreqlayered}
\hat m (\omega) = \tilde\chi(\omega,\tfrac\omega c 2e_3),\quad\omega\in\R\setminus\{0\} .
\end{equation*}

Considering the special structure of a layered medium, the susceptibility is described by a piecewise  constant function in $x_3$. This means explicitly that $\chi$ has the form
\begin{equation}\label{layeredmediumsusceptibility}
\chi (t, x_3) = \left\{
     \begin{array}{ll}
       \chi_0 :=0, &  x_3 \notin [0,L] \\
       \chi_n (t), & x_3 \in [L_n ,L_{n+1})
     \end{array}
   \right., \quad n=1,\ldots,N
\end{equation}
with (unknown) parameters $L=L_1 > L_2 > \ldots > L_{N+1} = 0$ characterising the thicknesses of the $N$ layers and (unknown) functions $\chi_n.$

\autoref{thSusceptibilityParametrisation}, for $\vartheta = e_3,$ gives
\[ m(t)=\frac c2\int_{-\infty}^\infty \bar\chi (\tau;\tau-t)\d\tau, \quad \mbox{where} \quad \bar \chi (\tau; \sigma)= \chi (\tau, \tfrac{c\sigma}2).\] 

Remarking that $\bar\chi$ is piecewise constant \eqref{layeredmediumsusceptibility} and additionally assuming that $\chi (\cdot, x_3)$ has compact support, see \autoref{eqSupportChi}, with $T< \tfrac2c \min_n (L_n - L_{n+1})$ \autoref{reconstructionformula} can be applied for $\vartheta =e_3$ to iteratively reconstruct $\chi$ starting from $\chi_0 =0.$

\paragraph{Modified Born Approximation \\}
In the proposed iteration scheme, \autoref{reconstructionformula}, the travelling of the incident field through the sample before reaching a ``specific'' layer, where the susceptibility is to be reconstructed, is not considered. To do so, a modified iteration method is presented describing the travelling of the light through the different layers using Frensel's equations.

The main idea is to consider, for example, in the second step of the recursive formula, given $\chi_1$ to find $\chi_2 ,$ as incident the field $\hat E^{(0)}, $  given by (\ref{incidentfieldfourier}), travelled also through the first layer. This process can be continued to the next steps. 

Let us first introduce some notations which will be used in the following. The fields $\hat E_n^{(r)}$ and $\hat E_n^{(t)}$ denote the reflected and the transmitted fields, with respect to the boundary $L_n,$ respectively. The transmitted field $\hat E_n^{(t)}$ after travelling through the $n$-th layer is incident on the $L_{n+1}$ boundary and is denoted by $\hat E_{n+1}^{(0)}.$ The reflected field by the $L_{n+1}$ boundary back to the $L_{n}$ boundary will be denoted by $\hat E_{n+1}^{(r)}$ and by $\hat E_{n}^{(r')}$ after travelling through the $n$-th layer (see  \autoref{fgLayered}). To simplify this model, multiple reflections are not included and the electric fields are taken to be tangential to the interface planes.

\begin{figure}[ht]
\begin{center}
 \includegraphics{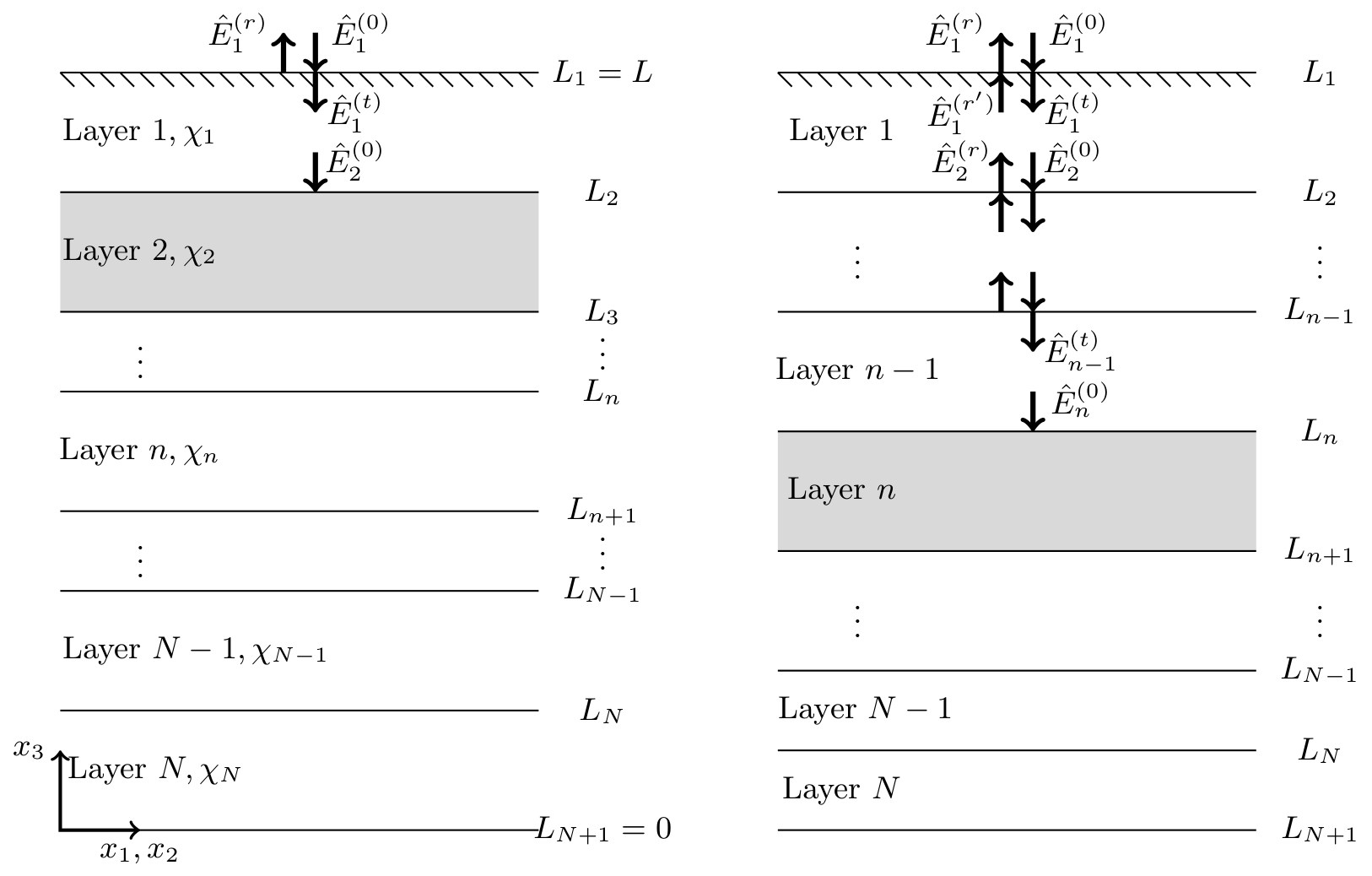}
\caption{Layered medium. Propagation of the initial field through the sample incident on the second layer (left image) and in general on the $n$th layer, for $n=3,...,N$ (right image).}
\label{fgLayered}
\end{center}
\end{figure}

\begin{lemma}
Let the sample have susceptibility given by \eqref{layeredmediumsusceptibility} and let $\rho_n$ and $\tau_n$ denote the reflection and the corresponding transmission coefficients for the $L_n$ boundary, respectively. Then, the field incident on the $n$th layer with respect to the initial incident field $\hat E^{(0)} := \hat E_1^{(0)}$ is given by
\begin{equation*}
\begin{pmatrix} \hat E_{n}^{(0)}\\ 0 \end{pmatrix} = \left( \mathcal{M}_1 \cdot \mathcal{M}_2 \cdot ... \cdot \mathcal{M}_{n-1} \right)^{-1}  \begin{pmatrix} \hat E^{(0)}\\ \hat E_1^{(r)} \end{pmatrix} \quad \mbox{for} \quad n=N-1,...,2
\end{equation*}
assuming no backward field in the $n$-th layer, where
\[
\mathcal{M}_n = \frac{1}{\tau_n} \begin{pmatrix}
 \e^{\i \tfrac{\omega}{c}\sqrt{\chi_n +1} (L_n - L_{n+1}) } & \rho_n \e^{-\i \tfrac{\omega}{c}\sqrt{\chi_n +1} (L_n - L_{n+1}) }  \\
 \rho_n \e^{\i \tfrac{\omega}{c}\sqrt{\chi_n +1} (L_n - L_{n+1}) } & \e^{-\i \tfrac{\omega}{c}\sqrt{\chi_n +1} (L_n - L_{n+1}) } 
 \end{pmatrix}.
\]
\end{lemma}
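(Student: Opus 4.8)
The plan is to realise $\mathcal{M}_n$ as the composition of two elementary steps — crossing the interface at $L_n$ and propagating through the $n$-th layer — and then to chain these single-layer relations into the claimed matrix product. First I would solve the field equation inside each homogeneous layer. Since $\chi$ is piecewise constant in $x_3$ by \eqref{layeredmediumsusceptibility} and the polarisation $p$ is tangential to the interfaces, the reduced Helmholtz equation in the $n$-th layer is $\partial_{x_3}^2\hat E + \frac{\omega^2}{c^2}(1+\chi_n)\hat E = 0$, whose general solution is a superposition of a down-going and an up-going plane wave with wavenumber $k_n = \frac\omega c\sqrt{\chi_n+1}$. I would then encode the field in layer $n$ by the pair of these two amplitudes, the down-going one being $\hat E_n^{(0)}$; at the topmost interface $L_1=L$ this pair is exactly $(\hat E^{(0)},\hat E_1^{(r)})$, the incident and reflected waves in the surrounding vacuum $\chi_0=0$.

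Next I would derive the single-layer relation in two steps. Matching the tangential field $\hat E$ and the tangential magnetic field (which is proportional to $\partial_{x_3}\hat E$ via \eqref{eqMaxwellFaraday} together with $B=H$) across the interface at $L_n$ gives, once expressed through the normal-incidence Fresnel coefficients $\rho_n = \frac{\sqrt{1+\chi_{n-1}}-\sqrt{1+\chi_n}}{\sqrt{1+\chi_{n-1}}+\sqrt{1+\chi_n}}$ and $\tau_n = 1+\rho_n$, the interface matrix $\frac1{\tau_n}\left(\begin{smallmatrix}1&\rho_n\\\rho_n&1\end{smallmatrix}\right)$. Propagating the two amplitudes across layer $n$, of thickness $L_n-L_{n+1}$, multiplies them by the diagonal phase matrix $\mathrm{diag}(\e^{\i k_n(L_n-L_{n+1})},\e^{-\i k_n(L_n-L_{n+1})})$. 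The product of these two factors is precisely $\mathcal{M}_n$, so that $\mathcal{M}_{n-1}$ maps the amplitude pair incident on layer $n$ to the amplitude pair incident on layer $n-1$.

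I would then iterate this one-step relation. Composing the single-layer maps from layer $n$ back up to the surrounding vacuum yields, by a straightforward induction on $n$, the identity $(\hat E^{(0)},\hat E_1^{(r)})^{\mathsf T} = \mathcal{M}_1\mathcal{M}_2\cdots\mathcal{M}_{n-1}\,(\hat E_n^{(0)},\eta)^{\mathsf T}$, where $\eta$ is the up-going amplitude in layer $n$. Finally, invoking the modelling simplification stated just before the lemma — multiple reflections are neglected, so that when layer $n$ is reached no wave travels back upward inside it — I set $\eta=0$. Inverting the matrix product then gives the asserted formula $(\hat E_n^{(0)},0)^{\mathsf T} = (\mathcal{M}_1\cdots\mathcal{M}_{n-1})^{-1}(\hat E^{(0)},\hat E_1^{(r)})^{\mathsf T}$.

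The main obstacle is not the composition, which is an elementary induction, but pinning down the single-layer matrix $\mathcal{M}_n$ with mutually consistent conventions: the direction of propagation, the reference boundary to which the phases are attached, and the precise Fresnel form of the interface matrix must all fit together so that interface times propagation reproduces the matrix in the statement. Some care is also needed to justify that discarding multiple reflections is exactly what legitimises setting the second component $\eta$ of the deep amplitude pair to zero.
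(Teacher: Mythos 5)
Your proposal is correct and follows essentially the same route as the paper: the paper likewise obtains the Fresnel coefficients $\rho_n,\tau_n$ from continuity of the tangential electric and magnetic fields (citing Hecht) and invokes the transfer-matrix formalism (citing Orfanidis) to get the single-layer relation $(\hat E_n^{(0)},\hat E_n^{(r)})^{\mathrm T}=\mathcal{M}_n(\hat E_{n+1}^{(0)},\hat E_{n+1}^{(r)})^{\mathrm T}$, then chains these matrices and sets the backward field in the $n$-th layer to zero before inverting. The only difference is one of detail, not of approach: you derive from first principles (plane-wave solutions of the reduced Helmholtz equation in each homogeneous layer, interface matching, and the interface-times-propagation factorisation of $\mathcal{M}_n$) what the paper simply imports from the cited literature.
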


\begin{proof}
Because of the assumptions (normal incidence, $\hat E^{(0)}$ tangential to the boundary) the boundary conditions require the continuity of the total (upward and downward) electric and magnetic fields. Then, the reflection $\rho_n$ and the corresponding transmission $\tau_n$ coefficients for the $L_n$ boundary in terms of the susceptibility are given by \cite{Hec02}
\begin{equation*}
\rho_n= \frac{\sqrt{\chi_{n-1} +1}- \sqrt{\chi_{n} +1}}{\sqrt{\chi_{n-1} +1}+ \sqrt{\chi_n +1}}, \quad \tau_n= 1+ \rho_n .
\end{equation*}
To determine the propagation equations for the electric fields, the transfer matrices formulation is applied \cite{Orf02}. In particular, the fields at the top of the $n$th layer can 
be computed with respect to the fields at the top of the $(n+1)$th using
\begin{equation*}
\begin{pmatrix} \hat E_n^{(0)} \\ \hat E_n^{(r)} \end{pmatrix} = \mathcal{M}_n \begin{pmatrix} \hat E_{n+1}^{(0)}\\ \hat E_{n+1}^{(r)} \end{pmatrix} \quad \mbox{for} \quad n=N-1,...,1.
\end{equation*}
and with respect to the incident field,
\begin{equation*}
 \begin{pmatrix} \hat E_1^{(0)}\\ \hat E_1^{(r)} \end{pmatrix} = \mathcal{M}_1 \cdot ... \cdot \mathcal{M}_{n-1} \begin{pmatrix} \hat E_n^{(0)} \\ \hat E_n^{(r)} \end{pmatrix} \quad \mbox{for} \quad n=N-1,...,2.
\end{equation*}
\end{proof}

From the previous result, given $\chi_n $ (by the recursion relation of \autoref{reconstructionformula}), the matrix $\mathcal M_{n+1}$ is computed to obtain the update $\hat E_{n+1}^{(0)}$ which is then incident to the rest part of the sample. This means, that $\hat E^{(0)}$ is replaced by $\hat E_{n+1}^{(0)}$  in the derivation of the measurements and the recursion relation (\autoref{thSusceptibilityParametrisation} and \autoref{reconstructionformula}) for computing $\chi_{n+1}.$ For example, in the second step to reconstruct $\chi_2 ,$ the incident field is simply given by 
\begin{equation*}
\hat E_2^{(0)} = \tau_1 \\e^{-\i \tfrac{\omega}c \sqrt{\chi_1 +1} (L_1 - L_2 ) } \hat E^{(0)}.
\end{equation*} 

The only unknown in this representation is the boundary $L_2$ which can be approximated considering the point where change in the value of the measured function $\bar m$ is observed. The following analysis can be also extended for anisotropic media, but in a more complicated context since the displacement $D$ and the electric field $E$ are not always parallel.

A simplification usually made here is to consider the sample field as the sum of all the discrete reflections and neglect dispersion. This mathematical model was adopted by Bruno and Chaubell \cite{BruCha05} for solving the inverse scattering problem of determining the refractive index and the width of each layer from the output data. The solution was obtained using the Gauss--Newton method and the effect of the initial guesses was also considered.

Concluding, the travelling of the scattered field from the $n$-th layer through the sample could also be considered. Since the spherical waves can be represented as a superposition of plane waves by using similar techniques, in a more complicated form, one can obtain the transmitted scattered field.

\subsection{The Anisotropic Case}
In the anisotropic case, the susceptibility $\chi$ cannot be considered a multiple of the identity. Therefore, the problem is to reconstruct from the expressions
\[ p_j\big[\vartheta\times(\vartheta\times\tilde\chi(\omega,\tfrac\omega c(\vartheta+e_3))p)\big]_j,\quad j=1,2, \]
see \eqref{eqDataFromMeasurements}, the matrix valued function $\chi:\R\times\R^3\to\R^{3\times3}$, where it is assumed that measurements for every polarisation $p\in\R^2\times\{0\}$ of the initial field $E^{(0)}$ are available.

Introducing in analogy to \eqref{eqMeasurementsIsotropic} the function
\[ \hat m_{p,j}:\R\times S^2_+\to\C,\quad \hat m_{p,j}(\omega,\vartheta)=\tilde\chi_{\vartheta,p,j}(\omega,\tfrac\omega c(\vartheta+e_3)), \]
where $\tilde\chi_{\vartheta,p,j}$ is for every $\vartheta\in S^2_+$, $p\in\R^2\times\{0\}$, and $j\in\{1,2\}$ the (spatial and temporal) Fourier transform of 
\[ \chi_{\vartheta,p,j}:\R\times\R^3\to\R,\quad\chi_{\vartheta,p,j}(t,x) = p_j\big[\vartheta\times(\vartheta\times\chi(t,x)p)\big]_j, \]
\autoref{thSusceptibilityParametrisation} (with $m$ replaced by $m_{p,j}$ and $\chi$ replaced by $\chi_{\vartheta,p,j}$) can be applied to find that the inverse Fourier transform of $\hat m_{p,j}$ with respect to its first variable fulfils
\[ m_{p,j}(t,\vartheta) = \frac c{\sqrt{2(1+\vartheta_3)}}\int_{-\infty}^\infty\int_{E_{\tau-t,\vartheta}}\chi_{\vartheta,p,j}(\tau,y)\d s(y)\d\tau. \]

Now, the same assumptions as in \autoref{seIsotropicDispersive} are considered, namely \autoref{eqSupportChi} and similar to \autoref{eqDiscretisationAssumption}:
\begin{assumption}
The approximation
\[ \int_{E_{nT,\vartheta}}\chi_{\vartheta,p,j}(\tau,y)\d s(y)\approx\int_{E_{nT+\varepsilon,\vartheta}}\chi_{\vartheta,p,j}(\tau,y)\d s(y)\quad\text{for all}\quad\varepsilon\in[-\tfrac T2,\tfrac T2)  \]
is for every $\tau\in\R$, $\vartheta\in S^2_+$, $n\in\Z$, $p\in\R^2\times\{0\}$, and $j\in\{1,2\}$ justified.
\end{assumption}

Then, \autoref{reconstructionformula} provides an approximate reconstruction formula for the functions
\begin{equation}\label{eqReconstructedMeasurementData}
\bar\chi_{p,j}(\tau;\sigma,\vartheta) = \int_{E_{\sigma,\vartheta}}\chi_{\vartheta,p,j}(\tau,y)\d s(y) = p_j\left[\vartheta\times\left(\vartheta\times\bar\chi(\tau;\sigma,\vartheta)p\right)\right]_j
\end{equation}
for all $p\in\R^2\times\{0\}$, $\tau\in\R$, $\sigma\in\R$, $\vartheta\in S^2_+$, and $j\in\{1,2\}$, where
\begin{equation}\label{eqRadonTransformSuszeptibility}
\bar\chi(\tau;\sigma,\vartheta) = \int_{E_{\sigma,\vartheta}}\chi(\tau,y)\d s(y)
\end{equation}
denotes the two dimensional Radon transform data of the function $\chi(\tau,\cdot)$.

\begin{proposition}\label{thReconstructableMatrixEntries}
Let $\vartheta\in S^2_+$ be fixed 
and $a_{p,j}$, $p\in\R^2\times\{0\}$, $j=1,2$, be such that the equations
\begin{equation}\label{eqMeasurementsPolarisation}
p_j[\vartheta\times(\vartheta\times Xp)]_j = a_{p,j}\quad\text{for all}\quad p\in\R^2\times\{0\},\;j\in\{1,2\},
\end{equation}
for the matrix $X\in\R^{3\times3}$ have a solution.

Then $X\in\R^{3\times3}$ is a solution of \eqref{eqMeasurementsPolarisation} if and only if
\begin{equation}\label{eqReconstructableMatrixEntries}
(P_\vartheta X)_{k\ell}=B_{k\ell},\quad B=\begin{pmatrix}-a_{e_1,1}&a_{e_1,1}-a_{e_1+e_2,1}\\a_{e_2,2}-a_{e_1+e_2,2}&-a_{e_2,2}\end{pmatrix},\quad k,\ell\in\{1,2\},
\end{equation}
where $P_\vartheta\in\R^{3\times3}$ denotes the orthogonal projection in direction $\vartheta$.
\end{proposition}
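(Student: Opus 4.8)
The plan is to turn the quadratic system \eqref{eqMeasurementsPolarisation} into a handful of linear equations for the upper-left $2\times2$ block of $P_\vartheta X$, and then to read off that block by probing with three polarisations. First I would use the identity $\vartheta\times(\vartheta\times v)=\langle\vartheta,v\rangle\vartheta-|\vartheta|^2v$ (already employed in the proof of \autoref{thAsymptoticExpansionGreenFunction}) together with $|\vartheta|=1$ and the defining property $P_\vartheta v=v-\langle\vartheta,v\rangle\vartheta$ of the orthogonal projection in direction $\vartheta$, which gives $\vartheta\times(\vartheta\times Xp)=-P_\vartheta Xp$. Hence \eqref{eqMeasurementsPolarisation} is equivalent to
\[ -p_j(P_\vartheta Xp)_j=a_{p,j},\quad p\in\R^2\times\{0\},\;j\in\{1,2\}. \]
The structural observation driving everything is that, since $p_3=0$ and $j\in\{1,2\}$, one has $p_j(P_\vartheta Xp)_j=p_j\sum_{\ell=1}^2(P_\vartheta X)_{j\ell}p_\ell$, so only the four entries $(P_\vartheta X)_{k\ell}$, $k,\ell\in\{1,2\}$, enter the system: the third column is annihilated by $p_3=0$ and the third row is never selected because $j\le2$.

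For the forward implication I would evaluate the reduced equations at $p=e_1$, $p=e_2$, and $p=e_1+e_2$. The polarisation $e_1$ with $j=1$ yields $(P_\vartheta X)_{11}=-a_{e_1,1}$, the polarisation $e_2$ with $j=2$ yields $(P_\vartheta X)_{22}=-a_{e_2,2}$, while $e_1+e_2$ gives $(P_\vartheta X)_{11}+(P_\vartheta X)_{12}=-a_{e_1+e_2,1}$ and $(P_\vartheta X)_{21}+(P_\vartheta X)_{22}=-a_{e_1+e_2,2}$. Solving these four scalar equations reproduces precisely the matrix $B$ of \eqref{eqReconstructableMatrixEntries}, so any solution $X$ of \eqref{eqMeasurementsPolarisation} satisfies $(P_\vartheta X)_{k\ell}=B_{k\ell}$.

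For the converse I would invoke the standing hypothesis that \eqref{eqMeasurementsPolarisation} is solvable. Fixing one solution $X_0$, the forward step shows that its block equals $B$, and therefore $a_{p,j}=-p_j\sum_{\ell=1}^2B_{j\ell}p_\ell$ for every admissible $p$ and $j$. Consequently, for any $X$ with $(P_\vartheta X)_{k\ell}=B_{k\ell}$ the reduced left-hand side $-p_j(P_\vartheta Xp)_j=-p_j\sum_{\ell=1}^2B_{j\ell}p_\ell$ coincides with $a_{p,j}$ for all $p$, i.e.\ $X$ solves \eqref{eqMeasurementsPolarisation}.

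The one point that needs care is exactly this last passage from "three sampled values" to "all $p$": it is legitimate only because the data $a_{\cdot,j}$ is known a priori to be a homogeneous quadratic form of the special shape above (no $p_2^2$-term for $j=1$, no $p_1^2$-term for $j=2$), which is guaranteed by the solvability assumption — in particular this assumption forces the consistency relations $a_{e_1,2}=a_{e_2,1}=0$ coming from the vanishing prefactor $p_j$. Granting this, the remaining verification that such a quadratic form is pinned down by its values at $e_1$, $e_2$, $e_1+e_2$ is routine, and it also makes transparent that the solution set is the affine family of all $X$ whose block $(P_\vartheta X)_{k\ell}$ equals $B$, the third row and column of $X$ remaining completely unconstrained.
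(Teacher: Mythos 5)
Your proof is correct and follows essentially the same route as the paper's: both rest on the identity $\vartheta\times(\vartheta\times Xp)=-P_\vartheta Xp$, the observation that the left-hand side is a quadratic form in $(p_1,p_2)$ pinned down by the three polarisations $e_1$, $e_2$, $e_1+e_2$, and the solvability hypothesis to guarantee consistency of the data $a_{p,j}$ for all remaining $p$. The only difference is the order of the two steps (you project first and then sample the polarisations, while the paper reduces to the four sampled equations first and projects afterwards), which is immaterial.
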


\begin{proof}
First, remark that the equation system \eqref{eqMeasurementsPolarisation} is equivalent to the four equations
\begin{equation}\label{eqMeasurementsPolarisationReduced}
\begin{aligned}
a_{e_1,1} &= [\vartheta\times(\vartheta\times Xe_1)]_1,\qquad &a_{e_1+e_2,1} &= a_{e_1,1}+[\vartheta\times(\vartheta\times Xe_2)]_1,\\
a_{e_2,2} &= [\vartheta\times(\vartheta\times Xe_2)]_2,\qquad &a_{e_1+e_2,2} &= a_{e_2,2}+[\vartheta\times(\vartheta\times Xe_1)]_2,
\end{aligned}
\end{equation}
which correspond to the equations \eqref{eqMeasurementsPolarisation} for $(p,j)\in\{(e_1,1),(e_2,2),(e_1+e_2,1),(e_1+e_2,2)\}$. Indeed, for arbitrary polarisation $p=p_1e_1+p_2e_2$, the expression $p_j[\vartheta\times(\vartheta\times Xp)]_j$ can be written as a linear combination of the four expressions $[\vartheta\times(\vartheta\times Xe_i)]_k$, $i,k=1,2$:
\begin{align*}
p_1[\vartheta\times(\vartheta\times Xp)]_1 &= p_1^2[\vartheta\times(\vartheta\times Xe_1)]_1+p_1p_2[\vartheta\times(\vartheta\times Xe_2)]_1, \\
p_2[\vartheta\times(\vartheta\times Xp)]_2 &= p_1p_2[\vartheta\times(\vartheta\times Xe_1)]_2+p_2^2[\vartheta\times(\vartheta\times Xe_2)]_2,
\end{align*}
and is thus determined by \eqref{eqMeasurementsPolarisationReduced}.

Now, the equation system \eqref{eqMeasurementsPolarisationReduced} written in matrix form reads
\begin{equation}\label{eqReconstructableMatrixEntriesInComponents}
[\vartheta\times(\vartheta\times Xp)]_k = -\left[B\begin{pmatrix}p_1\\p_2\end{pmatrix}\right]_k,\quad k\in\{1,2\},
\end{equation}
for all $p\in\R^2\times\{0\}$ with $B$ defined by \eqref{eqReconstructableMatrixEntries}.

Decomposing $Xp=\left<\vartheta,Xp\right>\vartheta+P_\vartheta Xp$, where $P_\vartheta\in\R^{3\times3}$ denotes the orthogonal projection in direction $\vartheta$, and using that
\[ \vartheta\times(\vartheta\times Xp) = \vartheta\times(\vartheta\times P_\vartheta Xp) = \left<\vartheta,P_\vartheta Xp\right>\vartheta-P_\vartheta Xp = -P_\vartheta Xp, \] 
the equation \eqref{eqReconstructableMatrixEntriesInComponents} can be written in the form \eqref{eqReconstructableMatrixEntries}.
\end{proof}

\autoref{thReconstructableMatrixEntries} applied to the equations \eqref{eqReconstructedMeasurementData} for the matrix $X=\bar\chi(\tau;\sigma,\vartheta)$ for some fixed values $\tau, \sigma\in\R$ and $\vartheta\in S^2_+$ shows that the data $a_{p,j}=p_j[\vartheta\times(\vartheta\times\bar\chi(\tau;\sigma,\vartheta))]_j$ for $j=1,2$ and the three different polarisation vectors $p=e_1$, $p=e_2$, and $p=e_1+e_2$ uniquely determine with equation \eqref{eqReconstructableMatrixEntries} the projection
\[ (P_\vartheta\bar\chi(\tau;\sigma,\vartheta))_{k,\ell}=\int_{E_{\sigma,\vartheta}}(P_\vartheta\chi(\tau,y))_{k,\ell}\,\d s(y)\quad\text{for}\quad k,\ell\in\{1,2\}. \]
Moreover, measurements for additional polarisations $p$ do not provide any further informations so that at every detector point, corresponding to a direction $\vartheta\in S^2_+$, only the four elements $(P_\vartheta\chi)_{k,\ell}$, $k,\ell=1,2$, of the projection $P_\vartheta\chi$ influence the measurements.

To obtain additional data which make a full reconstruction of $\chi$ possible, one can carry out extra measurements after slight rotations of the sample.

So, let $R\in\mathrm{SO}(3)$ describe the rotation of the sample. Then the transformed susceptibility $\chi_R$ is given by 
\begin{equation}\label{eqRotatedSusceptibility}
\chi_R(t,y) = R\chi(t,R^{\mathrm T}y)R^{\mathrm T}.
\end{equation}

\begin{lemma}
Let $\chi:\R\times\R^3\to\R^{3\times3}$ be the susceptibility of the sample and $\vartheta\in S^2_+$ be given. Furthermore, let $R\in\mathrm{SO}(3)$ be such that there exists a constant $\alpha_R>0$ and a direction $\vartheta_R\in S^2_+$ with
\begin{equation}\label{eqRotatedDetector}
\vartheta_R+e_3=\alpha_RR(\vartheta+e_3)
\end{equation}
and define the susceptibility $\chi_R$ of the rotated sample by \eqref{eqRotatedSusceptibility}.

Then, the data
\begin{equation}\label{eqRotatedMeasurements}
\bar\chi_{R,p,j}(\tau;\sigma,\vartheta_R)=p_j\left[\vartheta_R\times\left(\vartheta_R\times\int_{E_{\sigma,\vartheta_R}}\chi_R(\tau,y)\d s(y)\right)\right]_j,
\end{equation}
corresponding to the measurements of the rotated sample at the detector in direction $\vartheta_R$, see~\eqref{eqReconstructedMeasurementData}, fulfil that
\begin{equation}\label{eqRotatedMeasurementsSimplified}
\bar\chi_{R,p,j}(\tau;\alpha_R\sigma,\vartheta_R) = p_j[\vartheta_R\times(\vartheta_R\times R\bar\chi(\tau;\sigma,\vartheta)R^{\mathrm T}]_j
\end{equation}
for all $\tau,\sigma\in\R$, $p\in\R^2\times\{0\}$, $j=1,2$, where $\bar\chi$ is given by \eqref{eqRadonTransformSuszeptibility}.
\end{lemma}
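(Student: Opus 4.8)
The plan is to reduce \eqref{eqRotatedMeasurementsSimplified} to a single change-of-variables identity for the surface integral defining the Radon-type data, since both sides of the claimed equation are obtained by applying the \emph{same} operation $p_j[\vartheta_R\times(\vartheta_R\times\,\cdot\,)]_j$ to a $3\times3$ matrix. Concretely, unfolding the definition \eqref{eqRotatedMeasurements} at the rescaled offset $\alpha_R\sigma$, it suffices to establish the matrix identity
\[ \int_{E_{\alpha_R\sigma,\vartheta_R}}\chi_R(\tau,y)\,\d s(y)=R\,\bar\chi(\tau;\sigma,\vartheta)\,R^{\mathrm T}, \]
with $\bar\chi$ as in \eqref{eqRadonTransformSuszeptibility}; applying $p_j[\vartheta_R\times(\vartheta_R\times\,\cdot\,)]_j$ to both sides of this then yields \eqref{eqRotatedMeasurementsSimplified} directly.

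First I would insert the definition \eqref{eqRotatedSusceptibility} of the rotated susceptibility, $\chi_R(\tau,y)=R\chi(\tau,R^{\mathrm T}y)R^{\mathrm T}$, and pull the constant matrices $R$ and $R^{\mathrm T}$ out of the surface integral, which leaves $R\big(\int_{E_{\alpha_R\sigma,\vartheta_R}}\chi(\tau,R^{\mathrm T}y)\,\d s(y)\big)R^{\mathrm T}$. Then I would substitute $z=R^{\mathrm T}y$, equivalently $y=Rz$. Because $R\in\mathrm{SO}(3)$ is a rigid rotation, this map is an isometry of $\R^3$ and therefore preserves the two-dimensional surface measure, so $\d s(y)=\d s(z)$ and only the domain of integration is affected by the substitution.

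The key step is to identify that image domain. A point $y$ lies in $E_{\alpha_R\sigma,\vartheta_R}$ precisely when $\langle\vartheta_R+e_3,y\rangle=c\,\alpha_R\sigma$; writing $y=Rz$ and using orthogonality of $R$ turns this into $\langle R^{\mathrm T}(\vartheta_R+e_3),z\rangle=c\,\alpha_R\sigma$, and the hypothesis \eqref{eqRotatedDetector}, $\vartheta_R+e_3=\alpha_RR(\vartheta+e_3)$, gives $R^{\mathrm T}(\vartheta_R+e_3)=\alpha_R(\vartheta+e_3)$. The two factors of $\alpha_R$ cancel and the condition reduces exactly to $\langle\vartheta+e_3,z\rangle=c\sigma$, i.e. $z\in E_{\sigma,\vartheta}$. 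Hence the substitution carries $E_{\alpha_R\sigma,\vartheta_R}$ bijectively onto $E_{\sigma,\vartheta}$, and the integral collapses to $\int_{E_{\sigma,\vartheta}}\chi(\tau,z)\,\d s(z)=\bar\chi(\tau;\sigma,\vartheta)$, which is the matrix identity above.

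I expect the only real subtlety — the main obstacle — to be the bookkeeping in this last step: one must verify that the single positive scaling $\alpha_R$ built into the detector relation \eqref{eqRotatedDetector} is exactly compensated by evaluating the rotated data at the rescaled argument $\alpha_R\sigma$, so that the two normal vectors agree up to a positive multiple \emph{and} the affine offsets match, making the planes coincide rather than merely be parallel. Everything else is routine: the invariance of the surface measure under the rotation, and the linearity that lets $R$ and $R^{\mathrm T}$ pass through the integral and commute with the subsequent cross-product operation.
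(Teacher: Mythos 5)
Your proof is correct and follows essentially the same route as the paper's: insert the definition of $\chi_R$, substitute $z=R^{\mathrm T}y$, and use the relation \eqref{eqRotatedDetector} to identify the transformed plane (the paper maps $E_{\sigma,\vartheta_R}$ to $E_{\sigma/\alpha_R,\vartheta}$ and then rescales, whereas you evaluate at $\alpha_R\sigma$ from the start --- a trivial reparametrisation). Your explicit remark that the rotation preserves the surface measure is a point the paper leaves implicit, but otherwise the two arguments coincide.
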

\begin{proof}
Inserting definition \eqref{eqRotatedSusceptibility} and substituting $z=R^{\mathrm T}y$, formula \eqref{eqRotatedMeasurements} becomes
\[ \bar\chi_{R,p,j}(\tau;\sigma,\vartheta_R)=p_j\left[\vartheta_R\times\left(\vartheta_R\times\int_{R^{\mathrm T}E_{\sigma,\vartheta_R}}R\chi(\tau,z)R^{\mathrm T}\d s(z)\right)\right]_j. \]
Since now, by the definition \eqref{eqPlane} of the plane $E_{\sigma,\vartheta}$ and by the definition \eqref{eqRotatedDetector} of $\vartheta_R$,
\begin{align*}
R^{\mathrm T}E_{\sigma,\vartheta_R}&=\{R^{\mathrm T}y\in\R^3\mid\left<\vartheta_R+e_3,y\right>=c\sigma\} \\
&= \{z\in\R^3\mid\left<R^{\mathrm T}(\vartheta_R+e_3),z\right>=c\sigma\} \\
&= \{z\in\R^3\mid\alpha_R\left<\vartheta+e_3,z\right>=c\sigma\} = E_{\frac\sigma{\alpha_R},\vartheta},
\end{align*}
it follows \eqref{eqRotatedMeasurementsSimplified}.
\end{proof}

This means that the data $\bar\chi_{R,p,j}(\tau;\alpha_R\sigma,\vartheta_R)$ obtained from a detector placed in the direction $\vartheta_R$, defined by \eqref{eqRotatedDetector}, depends only on the Radon transform data $\bar\chi(\tau;\sigma,\vartheta)$. However, it still remains the algebraic problem of solving the equations \eqref{eqRotatedMeasurementsSimplified} for different rotations $R$ to obtain the matrix $\bar\chi(\tau;\sigma,\vartheta)\in\R^{3\times3}$. 

\begin{proposition}\label{thReconstructionAnisotropic}
Let $A\in\R^{3\times 3}$ and $\vartheta\in S^2_+$ be given. Moreover, let $R_0,R_1,R_2\in\mathrm{SO}(3)$ be rotations so that every proper subset of $\{R^{\mathrm T}_0e_3,R^{\mathrm T}_1e_3,R^{\mathrm T}_2e_3,\vartheta+e_3\}$ is linearly independent and such that 
there exist for every $R\in\{R_0,R_1,R_2\}$ constants $\alpha_R>0$ and $\vartheta_R\in S^2_+$ fulfilling~\eqref{eqRotatedDetector}.

Let further $P\in\R^{2\times3}$ be the orthogonal projection in direction $e_3$, $P_\theta\in\R^{3\times3}$ the orthogonal projection in direction $\theta\in\R^3$, and
\[ B_R=\begin{pmatrix}-a_{R,e_1,1}&a_{R,e_1,1}-a_{R,e_1+e_2,1}\\a_{R,e_2,2}-a_{R,e_1+e_2,2}&-a_{R,e_2,2}\end{pmatrix},\quad a_{R,p,j} = p_j[\vartheta_R\times(\vartheta_R\times RAR^{\mathrm T}p)]_j, \]
for every $R\in\{R_0,R_1,R_2\}$.

Then, the equations
\begin{equation}\label{eqReconstructionAnisotropic} 
PP_{\vartheta_R}RXR^{\mathrm T}P^{\mathrm T}=B_R,\quad R\in\{R_0,R_1,R_2\}
\end{equation}
have the unique solution $X=A$.
\end{proposition}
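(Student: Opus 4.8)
The plan is to reduce the statement to a purely geometric rank argument about the difference of two solutions, after re‑expressing the constraint that each rotation imposes on $X$.

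First I would verify that $X=A$ is a solution and record an equivalent form of \eqref{eqReconstructionAnisotropic}. Since $P$ extracts the first two coordinates, for any $M\in\R^{3\times3}$ the product $PMP^{\mathrm T}$ is just the upper–left block $(M_{k\ell})_{k,\ell\in\{1,2\}}$; hence \eqref{eqReconstructionAnisotropic} reads $(P_{\vartheta_R}RXR^{\mathrm T})_{k\ell}=(B_R)_{k\ell}$ for $k,\ell\in\{1,2\}$. The quantities $a_{R,p,j}=p_j[\vartheta_R\times(\vartheta_R\times RAR^{\mathrm T}p)]_j$ are exactly the data of \autoref{thReconstructableMatrixEntries} for the matrix $RAR^{\mathrm T}$ at direction $\vartheta_R$; since that matrix is a solution of \eqref{eqMeasurementsPolarisation}, the proposition gives $(P_{\vartheta_R}RAR^{\mathrm T})_{k\ell}=(B_R)_{k\ell}$. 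Thus $X=A$ satisfies all three equations, and only uniqueness remains.

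For uniqueness I would pass to the difference $D=X-A$, which by linearity solves the homogeneous system $(P_{\vartheta_R}RDR^{\mathrm T})_{k\ell}=0$, $k,\ell\in\{1,2\}$, for $R\in\{R_0,R_1,R_2\}$. The decisive step is a geometric rewriting of these four scalar conditions. Writing $(P_{\vartheta_R}RDR^{\mathrm T})_{k\ell}=(R^{\mathrm T}P_{\vartheta_R}e_k)^{\mathrm T}D(R^{\mathrm T}e_\ell)$ and using $P_{\vartheta_R}=\id-\vartheta_R\vartheta_R^{\mathrm T}$, one checks (using $(\vartheta_R)_3>0$) that $\{P_{\vartheta_R}e_1,P_{\vartheta_R}e_2\}$ spans $\vartheta_R^\perp$, while $\{e_1,e_2\}$ spans $e_3^\perp$. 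Setting $n_R=R^{\mathrm T}e_3$ and $m_R=R^{\mathrm T}\vartheta_R$, the conditions say $a^{\mathrm T}Db=0$ for all $b$ in $n_R^\perp$ and all $a$ in $m_R^\perp$, i.e.
\[ V_R:=\bigl\{D\in\R^{3\times3}\mid D(n_R^\perp)\subseteq\mathrm{span}(m_R)\bigr\}. \]
Multiplying the detector relation \eqref{eqRotatedDetector} by $R^{\mathrm T}$ gives $m_R=\alpha_R(\vartheta+e_3)-n_R$, so $m_R$ is a combination of $\vartheta+e_3$ and $n_R$.

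With this reformulation the conclusion follows, and I expect the rank argument to be the heart of the matter. If $D\in V_{R_0}\cap V_{R_1}\cap V_{R_2}$, then $D$ maps each plane $n_{R_i}^\perp$ into a line, so $\ker D\cap n_{R_i}^\perp\neq\{0\}$ for each $i$. If $\mathrm{rank}\,D=2$, the one–dimensional $\ker D$ would then be orthogonal to all three $n_{R_i}=R_i^{\mathrm T}e_3$, which span $\R^3$ by the independence hypothesis (any three of the four vectors are independent) — impossible; hence $\mathrm{rank}\,D\le1$. Writing $D=u\,g^{\mathrm T}$ with $u,g\neq0$, the inclusion $D(n_{R_i}^\perp)\subseteq\mathrm{span}(m_{R_i})$ forces, for each $i$, either $g\parallel n_{R_i}$ or $u\parallel m_{R_i}$. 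Since $g$ can be parallel to at most one of the pairwise independent $n_{R_i}$, at least two indices require $u\parallel m_{R_i}$, whence $m_{R_i}\parallel m_{R_j}$; substituting $m_R=\alpha_R(\vartheta+e_3)-n_R$ produces a linear relation among $\vartheta+e_3,n_{R_i},n_{R_j}$ whose $n_{R_i}$–coefficient equals $-1$, contradicting the independence of these three vectors. Therefore $D=0$ and $X=A$ is the unique solution. The main obstacle is establishing the geometric identity for $V_R$ together with $m_R=\alpha_R(\vartheta+e_3)-n_R$; once these are in place, the rank reduction and the two–index argument are short.
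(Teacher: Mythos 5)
Your proposal is correct, but it follows a genuinely different route from the paper's proof. You split the statement into existence and uniqueness: existence by invoking \autoref{thReconstructableMatrixEntries} for the matrix $RAR^{\mathrm T}$ at direction $\vartheta_R$ (which is exactly why $PP_{\vartheta_R}RAR^{\mathrm T}P^{\mathrm T}=B_R$), and uniqueness by a rank argument on the difference $D=X-A$, after recasting the homogeneous conditions as $D(n_R^\perp)\subseteq\mathrm{span}(m_R)$ with $n_R=R^{\mathrm T}e_3$ and $m_R=R^{\mathrm T}\vartheta_R=\alpha_R(\vartheta+e_3)-n_R$; the independence hypotheses (all proper subsets of $\{R_0^{\mathrm T}e_3,R_1^{\mathrm T}e_3,R_2^{\mathrm T}e_3,\vartheta+e_3\}$) first force $\mathrm{rank}\,D\le1$ and then rule out rank one, each step consuming exactly the pairwise and triple independence the hypothesis provides. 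The paper instead works with the inhomogeneous system and solves it constructively: using $PP_{\vartheta_R}=P-\alpha_R PR(\vartheta+e_3)\vartheta_R^{\mathrm T}$ and auxiliary unit vectors $\eta_R$ orthogonal to $\vartheta+e_3$ and $R^{\mathrm T}e_3$, it first isolates the tangential part $P_{\vartheta+e_3}X$ via \eqref{eqProjectionOfSusceptibilityOntoBlocks} and then recovers the complementary component $(\vartheta+e_3)(\vartheta+e_3)^{\mathrm T}X$ via \eqref{eqProjectionIdentityForXOrthogonal}, so $X$ is expressed step by step in terms of the data $B_R$. The trade-off: your argument is shorter and geometrically transparent, but it is purely a uniqueness proof; the paper's longer computation doubles as a reconstruction procedure for $\bar\chi(\tau;\sigma,\vartheta)$ from the measurements, which is what the surrounding section actually needs for the anisotropic inverse problem. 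Both proofs rest on the same two ingredients, the detector relation \eqref{eqRotatedDetector} and the linear independence assumption, so your proposal is a valid and self-contained alternative.
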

\begin{proof}
Using that $\vartheta_R=\alpha_RR(\vartheta+e_3)-e_3$, see \eqref{eqRotatedDetector}, it follows with $Pe_3=0$ that
\[ PP_{\vartheta_R}=P(\mathbbm1-\vartheta_R\vartheta_R^{\mathrm T}) = P-\alpha_RPR(\vartheta+e_3)\vartheta_R^{\mathrm T}. \]
With this identity, the  equations \eqref{eqReconstructionAnisotropic} can be written in the form
\begin{equation}\label{eqProjectionIdentityForX}
PR(X-\alpha_R(\vartheta+e_3)\vartheta_R^{\mathrm T}RX)R^{\mathrm T}P^{\mathrm T} = B_R.
\end{equation}

Let now $\eta_R\in\R^3$ denote a unit vector orthogonal to $\vartheta+e_3$ and orthogonal to $R^{\mathrm T}e_3$. 
Then $R\eta_R$ is orthogonal to $e_3$ and therefore, with $P^{\mathrm T}P=P_{e_3}$,
\[ (PR\eta_R)^{\mathrm T}PR = (P_{e_3}R\eta_R)^{\mathrm T}R = (R\eta_R)^{\mathrm T}R = \eta_R^{\mathrm T}. \]
Thus, multiplying the equation \eqref{eqProjectionIdentityForX} from the left with $(PR\eta_R)^{\mathrm T}$, it follows that
\[ \eta_R^{\mathrm T}(X-\alpha_R(\vartheta+e_3)\vartheta_R^{\mathrm T}RX)R^{\mathrm T}P^{\mathrm T} = (PR\eta_R)^{\mathrm T}B_R. \]
Since now $\eta_R$ is orthogonal to $\vartheta+e_3$ this simplifies to
\begin{equation}\label{eqProjectionOfSusceptibility}
\eta_R^{\mathrm T}XR^{\mathrm T}P^{\mathrm T} = (PR\eta_R)^{\mathrm T}B_R.
\end{equation}

Remarking that the orthogonal projection onto the line $\R\eta_R$ can be written as the composition of two orthogonal projections onto orthogonal planes with intersection $\R\eta_R$:
\[ \eta_R\eta_R^{\mathrm T}=P_{\nu_R}P_{\vartheta+e_3}, \]
where $\nu_R$ is a unit vector orthogonal to $\eta_R$ and $\vartheta+e_3$, and multiplying equation \eqref{eqProjectionOfSusceptibility} from the left with $\eta_R$, one finds that
\[ P_{\nu_R}(P_{\vartheta+e_3}X)R^{\mathrm T}P^{\mathrm T} = \eta_R(PR\eta_R)^{\mathrm T}B_R. \]
Applying then the projection $P$ from the right and using that $R^{\mathrm T}P^{\mathrm T}P=R^{\mathrm T}P_{e_3}=P_{R^{\mathrm T}e_3}R^{\mathrm T}$, this can be written in the form
\begin{equation}\label{eqProjectionOfSusceptibilityOntoBlocks}
P_{\nu_R}(P_{\vartheta+e_3}X)P_{R^{\mathrm T}e_3} = \eta_R(PR\eta_R)^{\mathrm T}B_RPR.
\end{equation}
Evaluating this equation now for $R=R_0,R_1,R_2$ and remarking that $\{\nu_{R_0},\nu_{R_1},\nu_{R_2}\}$ and $\{R_0^{\mathrm T}e_3,R_1^{\mathrm T}e_3,R_2^{\mathrm T}e_3\}$ are linearly independent, one concludes that the $3\times3$ matrix $P_{\vartheta+e_3}X$ is uniquely determined by~\eqref{eqProjectionOfSusceptibilityOntoBlocks}.

However, it is also possible to calculate $X$ (and not only its projection $P_{\vartheta+e_3}X )$ from equation \eqref{eqProjectionIdentityForX}. Because
\[ X=P_{\vartheta+e_3}X+\frac1{|\vartheta+e_3|^2}(\vartheta+e_3)(\vartheta+e_3)^{\mathrm T}X, \]
it follows from \eqref{eqProjectionIdentityForX} that
\begin{multline}\label{eqPRequation}
PR\left(\frac{\vartheta+e_3}{|\vartheta+e_3|^2}(1-\alpha_R\vartheta_R^{\mathrm T}R(\vartheta+e_3))(\vartheta+e_3)^{\mathrm T}X\right)R^{\mathrm T}P^{\mathrm T} \\
= B_R-PR(\mathbbm 1-\alpha_R(\vartheta+e_3)\vartheta_R^{\mathrm T}R)P_{\vartheta+e_3}XR^{\mathrm T}P^{\mathrm T}.
\end{multline}
Plugging in the identity
\[ \alpha_R\vartheta_R^{\mathrm T}R(\vartheta+e_3)=\vartheta_R^{\mathrm T}(\vartheta_R+e_3)=1+\vartheta_{R,3}, \]
which follows from the definition \eqref{eqRotatedDetector} of $\vartheta_R$, applying $P^{\mathrm T}$ from the left and $P$ from the right, and using as before $R^{\mathrm T}P^{\mathrm T}P=P_{R^{\mathrm T}e_3}R^{\mathrm T}$, the equation \eqref{eqPRequation} yields
\begin{multline}\label{eqProjectionIdentityForXOrthogonal}
-\vartheta_{R,3}P_{R^{\mathrm T}e_3}\left(\frac{\vartheta+e_3}{|\vartheta+e_3|^2}(\vartheta+e_3)^{\mathrm T}X\right)P_{R^{\mathrm T}e_3} \\
= R^{\mathrm T}P^{\mathrm T}B_RPR-P_{R^{\mathrm T}e_3}(\mathbbm 1-\alpha_R(\vartheta+e_3)\vartheta_R^{\mathrm T})P_{\vartheta+e_3}XP_{R^{\mathrm T}e_3}.
\end{multline}

Since the right hand side is already known (it depends only on $P_{\vartheta+e_3}X$), the equation system~\eqref{eqProjectionIdentityForXOrthogonal} for $R=R_0,R_1,R_2$ can be uniquely solved for
\[ \frac{\vartheta+e_3}{|\vartheta+e_3|^2}(\vartheta+e_3)^{\mathrm T}X. \]

Therefore, the equations \eqref{eqReconstructionAnisotropic} uniquely determine $X$ and because $A$ is by construction a solution of the equations, this implies that $X=A$.
\end{proof}

Thus, applying \autoref{thReconstructionAnisotropic} to the matrix $A=\bar\chi(\tau;\sigma,\vartheta)$ shows that the measurements $a_{R,p,j}$ obtained at the detectors $\vartheta_R$ for the polarisations $p=e_1,e_2,e_1+e_2$ and rotations $R=R_0,R_1,R_2$, fulfilling the assumptions of \autoref{thReconstructionAnisotropic}, provide sufficient information to reconstruct the Radon data $\bar\chi(\tau;\sigma,\vartheta)$. Calculating these two dimensional Radon data for all directions $\vartheta$ in some subset of $S^2_+$ (by considering some additional rotations so that for every direction $\vartheta$, there exist three rotations fulfilling the assumptions of \autoref{thReconstructionAnisotropic}), it is possible via an inversion of a limited angle Radon transform to finally recover the susceptibility $\chi$.

\section{Conclusions}

In this chapter, a general mathematical model of OCT based on Maxwell's equations has been presented.  As a consequence of this modelling, OCT was formulated as an inverse scattering problem for the susceptibility $\chi$. It was shown that without additional assumptions about the optical properties of the medium, in general, $\chi$ cannot be reconstructed due to lack of measurements. 
Some reasonable physical assumptions were presented, under which the medium can, in fact, be reconstructed. For instance, if the medium is isotropic, iterative schemes to reconstruct the susceptibility were developed. Dispersion and focus illumination are also considered. For an anisotropic medium, it follows that different incident fields, with respect to direction (rotating the sample) and polarisation,  should be considered to completely recover  $\chi.$ 

\section*{Acknowledgements}
The authors would like to thank Wolfgang Drexler and Boris Hermann 
from the Medical University Vienna for their valuable comments and stimulating discussions. 
This work has been supported by the Austrian Science Fund (FWF) within the national research network Photoacoustic Imaging in Biology and Medicine, projects S10501-N20 and S10505-N20.

\section{Cross - References}
\begin{itemize}
\item Inverse Scattering
\item Optical Imaging
\item Tomography
\item Wave Phenomena
\end{itemize}

\bibliographystyle{plain}

\end{document}